\documentclass[11 pt]{amsart}

\usepackage{lineno,hyperref}
\usepackage{amsmath,amssymb}
\usepackage{enumitem}
\usepackage{lineno}
%\modulolinenumbers[5]
\newtheorem{thm}{Theorem}[section]
\newtheorem{defn}[thm]{Definition}
\newtheorem{prop}[thm]{Proposition}
\newtheorem{cor}[thm]{Corollary}
\newtheorem{lem}[thm]{Lemma}

%\newproof{proof}{Proof}

%%%%%%%%%%%%%%%%%%%%%%%
%% Elsevier bibliography styles
%%%%%%%%%%%%%%%%%%%%%%%
%% To change the style, put a % in front of the second line of the current style and
%% remove the % from the second line of the style you would like to use.
%%%%%%%%%%%%%%%%%%%%%%%

%% Numbered
%\bibliographystyle{model1-num-names}

%% Numbered without titles
%\bibliographystyle{model1a-num-names}

%% Harvard
%\bibliographystyle{model2-names.bst}\biboptions{authoryear}

%% Vancouver numbered
%\usepackage{numcompress}\bibliographystyle{model3-num-names}

%% Vancouver name/year
%\usepackage{numcompress}\bibliographystyle{model4-names}\biboptions{authoryear}

%% APA style
%\bibliographystyle{model5-names}\biboptions{authoryear}

%% AMA style
%\usepackage{numcompress}\bibliographystyle{model6-num-names}

%% `Elsevier LaTeX' style
%\bibliographystyle{elsarticle-num}
%%%%%%%%%%%%%%%%%%%%%%%

\begin{document}

%\linenumbers

\title{On wovenness of $K$-fusion frames}

%----------Author 1
\author[A. Bhandari]{Animesh Bhandari $^\dagger$}

\address{$^\dagger$Department of Mathematics\\ NIT Meghalaya\\ Shillong 793003\\ India}
\email{animesh@nitm.ac.in}

%----------Author 2
\author[S. Mukherjee]{Saikat Mukherjee {$^{\dagger *}$}}
\address{$^\dagger$Department of Mathematics\\ NIT Meghalaya\\ Shillong 793003\\ India}
\email{saikat.mukherjee@nitm.ac.in}

%$\thanks{*Corresponding author}$
$\thanks{* Corresponding author; Supported by DST-SERB project MTR/2017/000797}$
%----------classification, keywords, date
\subjclass[2010]{Primary 42C15; Secondary 47A30}

%\keywords{$p$-adic setting, Besselet, multiframelet, multiframelet operator}

\begin{abstract}
In frame theory literature, there are several generalizations of frame, $K$-fusion frame presents a flavour of one such generalization, basically it is an intertwined replica of $K$-frame and fusion frame.
$K$-fusion frames come naturally (having significant applications) when one needs to reconstruct functions (signals) from a large data in the range of a bounded linear operator. Getting inspiration from the concept of weaving frames in Hilbert space, we study the weaving form of $K$-fusion frames which have significant applications in wireless sensor networks. This article produces various characterizations of weaving $K$-fusion frames in different spaces. Furthermore, Paley-Wiener type perturbation and conditions on erasure of frame components have been assembled to scrutinize woven-ness of the same.\\

\noindent \textbf{Keywords: } frame, $K$-fusion frame, weaving.
\end{abstract}

\maketitle

\section{Introduction}\label{Sec-Preli}
The notion of Hilbert frames was first introduced by Duffin and Schaeffer \cite{DuSc52} in 1952. After several decades, in 1986, frame theory has been popularized by the groundbreaking work by Daubechies, Grossman and Meyer \cite{DaGrMa86} by showing its practical significance in distributed signal processing. Since then frame theory has been widely applicable by mathematicians and engineers in various fields.

% of mathematics and engineering, namely, signal processing \cite{Fe99}, sensor network \cite{CaKuLiRo07}, etc.

Furthermore, frame theory literature became  familiarized through several generalizations, one such generalization is $K$-fusion frame, $K$-fusion frame was first studied by Liu et al. \cite{LiuLi18}. After that  Neyshaburi et al. \cite{NeAr18} and Bhandari at al. \cite{Bh17} produced several characterizations of $K$-fusion frame.

% likewise, fusion frame (frames of subspaces) \cite{CaKu04} , $G$-frame (generalized frames) \cite{Su06}, $K$-frame (atomic %systems) \cite{Ga12}, $K$-fusion frame (atomic subspaces) \cite{Bh17}, etc. and these generalizations have been proved to be %useful in various applications.

%We arrange this article as, section \ref{Sec-Preli} is devoted to the basic definitions and results that aid us to obtain the outcomes of this paper and characterizations of  $K$-frames and their weaving properties have been scrutinized in the section \ref{Results}.

Throughout the paper $\mathcal H$ is a separable Hilbert space, $\mathcal{L}(\mathcal{H}_1, \mathcal{H}_2)$ the space of all bounded linear operators from  $\mathcal{H}_1$ into $\mathcal{H}_2$,  $\mathcal L(\mathcal H)$ for $\mathcal L(\mathcal H, \mathcal H)$, $P_{\mathcal A}$ is the orthogonal projection on $\mathcal A$, $\mathcal I$ is countable index set, $R(T)$ is denoted as range of a bounded linear operator $T$ and $T^{\dag}$ is the Moore-Penrose pseudo inverse of $T$.

\begin {defn} ({\bf $K$-Fusion Frame})
Let $K \in \mathcal L(\mathcal H)$ for which a weighted collection  $\mathcal W_w = \{ (\mathcal W_i, w_i) \}_{i \in \mathcal I}$ of closed subspaces in $\mathcal{H}$ is said to be a \emph{ $K$-fusion frame} for  $\mathcal{H}$  if there exist constants $0<A, B < \infty $ so that for every $f \in \mathcal H$ we have,
 \begin{equation}A\|K^*f\|^2~ \leq ~\sum_{i \in \mathcal I} w_i ^2\|P_{\mathcal W_i}f\|^2 ~\leq ~B\|f\|^2.\end{equation}
% The numbers $A,B$ are called \emph{$K$-frame bounds}. The above collection is said to be a {\it tight $K$-frame} if
%\begin{equation}\label{Eq:K-tight-fram}
%A\|K^* f\|^2 = \sum_{i\in \mathcal I} | \langle f,f_i \rangle |^2, \end{equation}
%for all $f\in \mathcal H$.
\end{defn}

%$K$-frames are more general than ordinary frames in the sense that the lower frame bound only holds for the elements in the %range of $K$. Because of the higher generality of $K$-frames, the associated $K$-frame operator need not be invertible.
% Several methods to construct $K$-frames and the stability of perturbations for the $K$-frames have been discussed in \cite{Xi13}.

%The following Theorem  presents an algebraic property of two $K$-frames.

%\begin{thm} \label{sum} \cite{Ra16} The addition of two $K$-frames is also a $K$-frame for $\mathcal H$.

%\end{thm}

\subsection{{Woven and $K$-Woven Frame}} In general in a sensor networking system, a frame can be characterized by signals. If there are two frames, having same characteristics, then in absence of a frame element from the first frame, still we are able to get an error free result on account of the replacement of the frame element of first frame by the frame element of second frame.

In this context basically one can think of the  intertwinedness between two sets of sensors, or in general between two frames, which leads to the idea of weaving frames. Weaving frames or woven frames were introduced by Bemrose et al. in \cite{BeCaGrLaLy16}. Later the concept of woven-ness has been characterized by Bhandari et al. in \cite{BhMu19} and characterization of weaving $K$-frames has been produced by Deepshikha et al. in \cite{DeVa18}.

%In a Hilbert space $\mathcal H$, a family of frames $\lbrace f_{ij} \rbrace_{i \in \mathbb N , j \in [M]}$ is said to be {\it weakly woven} if for every partition $\lbrace \sigma_j \rbrace_{j \in [M]}$ of $\mathbb N$, $\lbrace f_{ij} \rbrace_{i \in \sigma_j , j \in [M]}$ forms a frame for $\mathcal H$.

\begin{defn} \cite{BhMu19}
	In $\mathcal H$, two frames $\lbrace f_i \rbrace_{i \in \mathcal I}$ and  $\lbrace g_i \rbrace_{i \in \mathcal I}$ are said to be {\it woven} if for every $\sigma \subset \mathcal I$,  $\lbrace f_i \rbrace_{i \in \sigma} \cup \lbrace g_i \rbrace_{i \in \sigma^c}$ also forms a frame for $\mathcal H$ and the associated frame operator for every weaving is defined as,
	$$S_{\mathcal F \mathcal G}f=\sum_{i \in \sigma}\langle f,f_i \rangle f_i + \sum_{i \in \sigma^c}\langle f,g_i \rangle g_i, ~~ \text{for}~ \text{all}~f \in \mathcal H.$$
\end{defn}

\begin{defn} \cite{DeVa18} A family of K-frames $\{\{\phi_{ij}\}_{j=1}^\infty: i \in[m]\} $ for $\mathcal{H}$ is said to be K-woven if there exist universal positive constants $ A, B $ such that for any partition $\{\sigma_i\}_{i\in[m]}$ of $\mathcal{I}$, the family $\bigcup_{i \in [m]} \{\phi_{ij}\}_{j\in\{\sigma_i\}}$ is a K-frame for $\mathcal{H}$ with lower and upper K-frame bounds A and B, respectively. Each family $\bigcup_{i \in [m]} \{\phi_{ij}\}_{j\in \sigma_i}$ is called a $K$-weaving.
\end{defn}

\begin{defn} \cite{GaVa17} Let $K \in \mathcal{L}(\mathcal{H})$ and consider two $K$-fusion frames $\{(\mathcal W_i, w_i)\}_{i \in \mathcal I}$, $\{(\mathcal V_i, v_i)\}_{i \in \mathcal I}$. Then they are said to be woven if there are universal constants $A, B$ so that for every $\sigma \subset \mathcal I$ and for every $f \in \mathcal H$ we have,
 \begin{equation}A\|K^*f\|^2~ \leq ~\sum_{i \in \sigma} w_i ^2\|P_{\mathcal W_i}f\|^2 + \sum_{i \in \sigma^c} v_i ^2\|P_{\mathcal V_i}f\|^2 ~\leq ~B\|f\|^2.\end{equation}
\end{defn}

The following result presents the woven-ness of $K$-fusion Bessel sequences.

\begin{prop} \cite {GaVa17} \label{Bessel} Let $ K \in \mathcal{L}(\mathcal{H})$ for which $\mathcal W_w = \{(\mathcal W_i, w_i)\}_{i \in \mathcal I}$ and $\mathcal V_v = \{(\mathcal V_i, v_i)\}_{i \in \mathcal I}$ be $ K$- fusion Bessel sequences in $\mathcal{H}$ with bounds $B_1, B_2$ respectively. Then for every $\sigma \subset \mathcal I$, the associated weaving between them also forms a $ K$- fusion Bessel sequence in $\mathcal{H}$ with the universal bound $B_1 + B_2$.
	
\end{prop}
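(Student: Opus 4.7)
The plan is to verify the upper bound directly: since being a $K$-fusion Bessel sequence only requires the upper inequality in the defining condition, it suffices to check that for every $f \in \mathcal H$ and every $\sigma \subset \mathcal I$,
\[
\sum_{i \in \sigma} w_i^2 \|P_{\mathcal W_i} f\|^2 + \sum_{i \in \sigma^c} v_i^2 \|P_{\mathcal V_i} f\|^2 \leq (B_1 + B_2)\|f\|^2.
\]
(No lower bound involving $K^*$ needs to be produced, so the $K$-dependence is actually inactive here.)

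First I would observe that each summand in both series is nonnegative, so dropping terms can only decrease the total. Consequently, for any $\sigma \subset \mathcal I$,
\[
\sum_{i \in \sigma} w_i^2 \|P_{\mathcal W_i} f\|^2 \leq \sum_{i \in \mathcal I} w_i^2 \|P_{\mathcal W_i} f\|^2 \leq B_1 \|f\|^2,
\]
using the Bessel bound for $\mathcal W_w$. The identical argument applied to $\sigma^c$ and $\mathcal V_v$ gives
\[
\sum_{i \in \sigma^c} v_i^2 \|P_{\mathcal V_i} f\|^2 \leq B_2 \|f\|^2.
\]
Adding these two inequalities yields the claimed bound $B_1 + B_2$, and since the choice of $\sigma$ and $f$ was arbitrary, the bound is universal.

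There is no real obstacle here; the only thing to be careful about is recognizing that a Bessel-type statement requires no lower frame inequality, so neither $K^*$ nor any injectivity/range condition on $K$ enters the argument. The proof amounts to monotonicity of nonnegative sums followed by addition, and the constant $B_1 + B_2$ is manifestly sharp for what this elementary argument produces.
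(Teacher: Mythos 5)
Your proof is correct and is the standard argument: the paper itself states this proposition as a cited result from Garg--Vashisht without reproducing a proof, and the intended justification is exactly what you wrote --- since only the upper (Bessel) inequality is required, one bounds each partial sum over $\sigma$ and $\sigma^c$ by the corresponding full Bessel sum and adds, giving the universal bound $B_1+B_2$ independent of $\sigma$. Your remark that $K$ plays no role in the Bessel estimate is also accurate.
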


%The following Theorem presents an equivalency between  weakly woven  and woven frames.

%\begin{thm} \cite{BeCaGrLaLy16} In $\mathcal H$, two frames are weakly woven if and only if they are woven.

%\end{thm}

%The following result produces a necessary and sufficient condition of woven-ness of two frames in finite dimension.

%\begin{thm} \cite {BeCaGrLaLy16} \label{thm-span} In $\mathcal H^n$, two frames $\lbrace f_i \rbrace_{i \in [m]}$ and  $\lbrace %g_i \rbrace_{i \in [m]}$ are woven if and only if for every $\sigma \subset [m]$, $\lbrace f_i \rbrace_{i \in \sigma} \cup \lbrace g_i %\rbrace_{i \in \sigma^c}$ spans $\mathcal H^n$.
%\end{thm}

%Removal of a collection of elements from two woven frames preserves the woven-ness of the said frames under certain condition.

%\begin{prop} \cite{CaLy15} \label{remove} Let in $\mathcal H$,  the frames $\mathcal F = \{f_i\}_{i \in \mathcal I}$ and $\mathcal %G = \{g_i\}_{i \in \mathcal I}$ be woven with the universal frame bounds $A, B$. Suppose $\mathcal J \subset \mathcal I$ and %$0<C<A$ so that for every $f \in \mathcal H$, $\sum \limits_{i \in \mathcal J} |\langle f,f_i \rangle |^2 \leq C \|f\|^2$, then %$\{f_i\}_{i \in \mathcal I \setminus \mathcal J}$ and $\{g_i\}_{i \in \mathcal I \setminus \mathcal J}$ are frames for $\mathcal H$ %as well as they are woven with the universal frame bounds $(A-C), B$.

%\end{prop}

The following Lemma provides a discussion regarding Moore-Penrose  pseudo-inverse. For detail discussion regarding the same we refer \cite{Ch08, Ka80}.

%\begin{thm}(Douglas' factorization theorem \cite{Do66})\label{Thm-Douglas} Let $\mathcal{H}_1, \mathcal{H}_2,$ and $\mathcal %H$ be Hilbert spaces and $S\in \mathcal L(\mathcal H_1, \mathcal H)$, $T\in \mathcal L(\mathcal H_2, \mathcal H)$. Then the %following are equivalent:
%\begin{enumerate}
%\item $R(S) \subset R(T)$.
%\item $SS^* \leq \alpha TT^*$ for some $\alpha >0$.
%\item $S = TL$ for some $L \in \mathcal L(\mathcal H_1, \mathcal H_2)$.
%\end{enumerate}
%\end{thm}

\begin{lem}\label{Moore} Let $\mathcal H$ and $\mathcal K$ be two Hilbert spaces and $T \in \mathcal L(\mathcal H,\mathcal K)$ be a closed range operator, then the followings hold:
	\begin{enumerate}
		\item $TT^{\dag}=P_{R(T)}$, $T^{\dag}T=P_{R(T^*)}$
		\item $\frac {\|f\|} {\|T^{\dag}\|} \leq \|T^*f\|$ for all $f \in T(\mathcal H)$.
		\item $TT^{\dag}T=T$, $T^{\dag}TT^{\dag}=T^{\dag}$, $(TT^{\dag})^*=TT^{\dag}$, $(T^{\dag}T)^*=T^{\dag}T$.
	\end{enumerate}
\end{lem}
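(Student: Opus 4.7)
The plan is to construct $T^{\dag}$ explicitly via the orthogonal decomposition induced by the closed-range hypothesis, then read off all three conclusions from the construction. First I would use closedness of $R(T)$ to write $\mathcal{K} = R(T) \oplus N(T^*)$ and $\mathcal{H} = N(T) \oplus R(T^*)$, and observe that the restriction $\tilde{T} := T|_{R(T^*)} : R(T^*) \to R(T)$ is a bounded bijection between Hilbert spaces, so the open mapping theorem supplies a bounded inverse $\tilde{T}^{-1}$. I would then define $T^{\dag}$ as $\tilde{T}^{-1}$ on $R(T)$ and as zero on $N(T^*)$, giving $T^{\dag} \in \mathcal{L}(\mathcal{K}, \mathcal{H})$.

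With $T^{\dag}$ built this way, the identities in (3) and (1) essentially fall out of the construction. The composition $TT^{\dag}$ acts as the identity on $R(T)$ and as zero on $N(T^*)$, so it coincides with $P_{R(T)}$, which is automatically self-adjoint and idempotent; by the symmetric argument $T^{\dag}T = P_{R(T^*)}$. These two equalities prove (1) and supply the two self-adjointness relations in (3). The identities $TT^{\dag}T = T$ and $T^{\dag}TT^{\dag} = T^{\dag}$ then follow because $T$ sends $\mathcal{H}$ into $R(T)$ (where $TT^{\dag}$ acts as the identity) and $T^{\dag}$ sends $\mathcal{K}$ into $R(T^*)$ (where $T^{\dag}T$ acts as the identity).

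For part (2), pick $f \in T(\mathcal{H}) = R(T)$. Part (1) gives $f = TT^{\dag}f$, and using self-adjointness of $TT^{\dag}$ from (3) rewrites this as $f = (TT^{\dag})^{*}f = (T^{\dag})^{*}T^{*}f$. Norm-estimating yields
\[
\|f\| \;=\; \|(T^{\dag})^{*}T^{*}f\| \;\leq\; \|(T^{\dag})^{*}\|\,\|T^{*}f\| \;=\; \|T^{\dag}\|\,\|T^{*}f\|,
\]
which rearranges to the stated inequality $\|f\|/\|T^{\dag}\| \leq \|T^{*}f\|$.

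The one genuine subtlety is boundedness of $T^{\dag}$: without the closed-range hypothesis the map $\tilde{T}$ is not surjective in the Banach-space sense, the open mapping theorem fails, and the pseudo-inverse is only densely defined and unbounded. Once closedness secures boundedness, the rest is bookkeeping with the two orthogonal decompositions.
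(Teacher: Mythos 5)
Your proof is correct. The paper does not prove this lemma at all --- it is stated as a known fact with a pointer to the references \cite{Ch08, Ka80} --- and your argument is precisely the standard construction found there (Christensen builds $T^{\dag}$ exactly by inverting the bijection $T|_{N(T)^{\perp}} : R(T^*) \to R(T)$ and extending by zero on $R(T)^{\perp}$), so there is nothing to reconcile: parts (1) and (3) fall out of the construction as you say, and your derivation of (2) from $f = (TT^{\dag})^{*}f = (T^{\dag})^{*}T^{*}f$ together with $\|(T^{\dag})^{*}\| = \|T^{\dag}\|$ is the intended one. Your closing remark correctly identifies the only place the closed-range hypothesis is genuinely used, namely to make $R(T^*)$ closed and $\tilde{T}^{-1}$ bounded via the bounded inverse theorem.
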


\begin{lem}(\cite{Ga07, LiYaZh15}) \label{lem-projection} Suppose  $\mathcal H$ and $\mathcal K$ are two Hilbert spaces and $T \in \mathcal L(\mathcal H,\mathcal K)$. Consider $\mathcal W$ be a closed subspace of $\mathcal H$ and $\mathcal V$ be a closed subspace of $\mathcal K$. Then the following results are satisfied:
	\begin{enumerate}
		\item $P_{\mathcal W}T^*P_{\overline {T \mathcal W}}=P_{\mathcal W}T^*$.
		\item $P_{\mathcal W}T^*P_{\mathcal V}=P_{\mathcal W}T^*$ if and only if $T \mathcal W \subset \mathcal V$.
	\end{enumerate}
\end{lem}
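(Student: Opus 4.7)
The plan is to prove both parts by exploiting the adjoint identity $\langle T^*g, w\rangle = \langle g, Tw\rangle$ combined with orthogonal decompositions induced by the relevant projections. Throughout, I would use self-adjointness and idempotence of orthogonal projections as the only nontrivial facts.

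For part (1), I would begin by writing an arbitrary $g \in \mathcal K$ as $g = P_{\overline{T\mathcal W}}g + h$ with $h \in (\overline{T\mathcal W})^\perp$, reducing the claim to the statement $P_{\mathcal W}T^*h = 0$. Since $h \perp T\mathcal W$, for every $w \in \mathcal W$ one gets $\langle T^*h, w\rangle = \langle h, Tw\rangle = 0$, so $T^*h$ lies in $\mathcal W^\perp$ and the projection $P_{\mathcal W}$ annihilates it. Applying this to every $g$ gives the identity $P_{\mathcal W}T^* = P_{\mathcal W}T^*P_{\overline{T\mathcal W}}$. One subtlety to watch is that one must take the closure $\overline{T\mathcal W}$ rather than $T\mathcal W$ itself in order for the projection to be well-defined; this causes no issue because orthogonality to a set is equivalent to orthogonality to its closure.

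For part (2), the direction $(\Leftarrow)$ would proceed by the same orthogonality argument with $\mathcal V$ playing the role of $\overline{T\mathcal W}$: if $T\mathcal W \subset \mathcal V$, then for any $g \in \mathcal K$ the component $P_{\mathcal V^\perp}g$ is orthogonal to $T\mathcal W$, so $T^*P_{\mathcal V^\perp}g \in \mathcal W^\perp$ by the same inner-product computation, and therefore $P_{\mathcal W}T^*P_{\mathcal V} = P_{\mathcal W}T^*$. For the direction $(\Rightarrow)$, I would take adjoints of both sides of the hypothesis, using that $P_{\mathcal W}$ and $P_{\mathcal V}$ are self-adjoint, to obtain $P_{\mathcal V}TP_{\mathcal W} = TP_{\mathcal W}$. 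Evaluating at any $w \in \mathcal W$ and using $P_{\mathcal W}w = w$ gives $Tw = P_{\mathcal V}Tw$, which forces $Tw \in \mathcal V$, hence $T\mathcal W \subset \mathcal V$.

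There is no substantive obstacle in this argument; the entire lemma is a careful bookkeeping exercise with projections and adjoints. The only points that require any care are remembering to pass to the closure of $T\mathcal W$ in part (1) and remembering that the necessity direction of part (2) must be argued by transposing the operator identity rather than trying to manipulate the projections directly.
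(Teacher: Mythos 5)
Your proof is correct. The paper states this lemma without proof, quoting it from the cited references, so there is no in-paper argument to compare against; your argument---reducing part (1) to showing $T^*h \in \mathcal W^{\perp}$ for $h \perp \overline{T\mathcal W}$ via the adjoint identity, and handling the necessity direction of part (2) by taking adjoints and evaluating on $\mathcal W$---is the standard one, and every step checks out.
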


Applying the foregoing Lemma we fabricate an analogous result.

\begin{lem} \label{analog} Let  $\mathcal H_1$, $\mathcal H_2$ be two Hilbert spaces and $T \in \mathcal L(\mathcal H_1, \mathcal H_2)$ be one-one, closed range operator. Suppose $\mathcal W$ is a closed subspace of $\mathcal H_1$ and $T(\mathcal W)$ is a closed subspace of $\mathcal H_2$. Then the following holds:
$$P_{T(\mathcal W)}T^{\dag *}P_{T^{\dag} {T (\mathcal W)}}= P_{T(\mathcal W)}T^{\dag *}P_{\mathcal W} = P_{T(\mathcal W)}T^{\dag *}.$$
		%\item $P_{T(\mathcal W)}T^{\dag *}P_{\mathcal W}=P_{T(\mathcal W)}T^{\dag *}$.
\end{lem}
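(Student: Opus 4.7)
The plan is to reduce the claim to a direct application of Lemma \ref{lem-projection}, after first establishing a convenient identity for $T^{\dagger}T$ under the one-one plus closed range hypothesis.

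First I would observe that since $T$ is one-one, $N(T)=\{0\}$, and since $R(T)$ is closed we have $R(T^{*})=N(T)^{\perp}=\mathcal H_{1}$. Hence Lemma \ref{Moore}(1) yields $T^{\dagger}T=P_{R(T^{*})}=I_{\mathcal H_{1}}$. Applied to the subspace $\mathcal W$, this gives $T^{\dagger}(T(\mathcal W))=\mathcal W$, so $P_{T^{\dagger}T(\mathcal W)}=P_{\mathcal W}$. This immediately settles the first equality $P_{T(\mathcal W)}T^{\dagger *}P_{T^{\dagger}T(\mathcal W)}=P_{T(\mathcal W)}T^{\dagger *}P_{\mathcal W}$.

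For the second equality, I would invoke Lemma \ref{lem-projection}(1) with the operator chosen as $S:=T^{\dagger}\in\mathcal L(\mathcal H_{2},\mathcal H_{1})$ and the closed subspace chosen as $T(\mathcal W)\subset\mathcal H_{2}$ (closed by hypothesis). The conclusion of that lemma then reads
\[
P_{T(\mathcal W)}\,S^{*}\,P_{\overline{S\cdot T(\mathcal W)}}=P_{T(\mathcal W)}\,S^{*},
\]
which is precisely
\[
P_{T(\mathcal W)}T^{\dagger *}P_{\overline{T^{\dagger}T(\mathcal W)}}=P_{T(\mathcal W)}T^{\dagger *}.
\]
Using the identity $T^{\dagger}T(\mathcal W)=\mathcal W$ established above (which is already closed, so the closure bar is harmless), the left side becomes $P_{T(\mathcal W)}T^{\dagger *}P_{\mathcal W}$, yielding the desired second equality.

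Since both steps are essentially a bookkeeping application of earlier lemmas, there is no serious obstacle; the only subtlety is making sure that Lemma \ref{lem-projection} is being applied to the correct operator (namely $T^{\dagger}$, not $T$) and the correct subspace (namely $T(\mathcal W)\subset\mathcal H_{2}$), and recognizing that the one-one plus closed range hypothesis is exactly what is needed to collapse $T^{\dagger}T$ to the identity and thus eliminate the awkward projection $P_{T^{\dagger}T(\mathcal W)}$.
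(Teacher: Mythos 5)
Your proof is correct and follows exactly the route the paper intends (the paper offers no written proof beyond the remark that the result follows by ``applying the foregoing Lemma''): you apply Lemma \ref{lem-projection}(1) to the operator $T^{\dagger}\in\mathcal L(\mathcal H_2,\mathcal H_1)$ and the closed subspace $T(\mathcal W)$, after using Lemma \ref{Moore}(1) together with injectivity and closed range to get $T^{\dagger}T=P_{R(T^*)}=I_{\mathcal H_1}$ and hence $T^{\dagger}T(\mathcal W)=\mathcal W$. Nothing is missing.
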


\section{Main Results}\label{Results}

We begin this section by providing two intertwining results on $K$-fusion frames between two separable Hilbert spaces.

\begin{lem}\label{lem_K_intertwine1}
Let $ K \in \mathcal{L}(\mathcal{H}_1)$ for which $\mathcal W_w = \{(\mathcal W_i, w_i)\}_{i \in \mathcal I}$  be a K- fusion frame for $\mathcal{H}_1$. Suppose $T \in \mathcal L(\mathcal H_1, \mathcal H_2)$ is a closed range operator with $T^{\dag} \overline {T(\mathcal W_i)} \subset \mathcal W_i$, for all $i \in \mathcal I$ and $\sum \limits_{i \in \mathcal I} w_i ^2 < \infty$. Then $\{(T\mathcal W_i, w_i)\}_{i \in \mathcal I}$ forms a $TKT^*$- fusion frame for $\mathcal{H}_2$.
\end{lem}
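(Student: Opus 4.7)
The plan is to reduce the target $TKT^*$-fusion frame inequality on $\mathcal H_2$ to the already-established $K$-fusion frame inequality on $\mathcal H_1$. For each $g \in \mathcal H_2$, the strategy is to evaluate the original inequality at the test vector $T^* g \in \mathcal H_1$ and then translate the resulting projections $P_{\mathcal W_i}T^* g$ into projections $P_{T\mathcal W_i} g$ (and conversely) by invoking Lemma \ref{lem-projection}; the two invariance conditions in that lemma deliver the upper and the lower bound respectively.

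For the upper bound, the hypothesis $T^\dagger \overline{T\mathcal W_i} \subset \mathcal W_i$ in conjunction with Lemma \ref{lem-projection}(2) applied to $T^\dagger \in \mathcal L(\mathcal H_2, \mathcal H_1)$ yields the identity
$$P_{T\mathcal W_i}\,T^{\dag *}\,P_{\mathcal W_i} \;=\; P_{T\mathcal W_i}\,T^{\dag *}.$$
Multiplying on the right by $T^*$ and using $T^{\dag *}T^* = (TT^\dagger)^* = P_{R(T)}$ together with the containment $T\mathcal W_i \subset R(T)$ (so that $P_{T\mathcal W_i}P_{R(T)} = P_{T\mathcal W_i}$) gives
$$P_{T\mathcal W_i}g \;=\; P_{T\mathcal W_i}\,T^{\dag *}\,P_{\mathcal W_i}\,T^* g,$$
whence $\|P_{T\mathcal W_i}g\|^2 \leq \|T^\dagger\|^2\,\|P_{\mathcal W_i}T^* g\|^2$. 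Summing with weights $w_i^2$ and applying the upper frame bound $B$ of $\mathcal W_w$ to $T^* g$ produces the universal upper bound $B\|T\|^2\|T^\dagger\|^2$.

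For the lower bound, I would run the argument in the opposite direction by invoking Lemma \ref{lem-projection}(1): from $P_{\mathcal W_i}T^* = P_{\mathcal W_i}T^* P_{\overline{T\mathcal W_i}}$ one extracts $\|P_{\mathcal W_i}T^* g\| \leq \|T\|\,\|P_{T\mathcal W_i}g\|$, where $T\mathcal W_i$ is taken closed, as is implicit for $P_{T\mathcal W_i}$ to be defined. Inserting this into the lower $K$-fusion inequality of $\mathcal W_w$ evaluated at $T^* g$, and noting that $\|(TKT^*)^* g\|^2 = \|TK^* T^* g\|^2 \leq \|T\|^2 \|K^* T^* g\|^2$, yields
$$\|(TKT^*)^* g\|^2 \;\leq\; \frac{\|T\|^4}{A}\,\sum_{i \in \mathcal I} w_i^2\,\|P_{T\mathcal W_i}g\|^2,$$
so the lower $TKT^*$-fusion frame bound is $A/\|T\|^4$.

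The main subtlety is the careful application of Lemma \ref{lem-projection}(2) with $T^\dagger$ in place of $T$, paired with the identity $T^{\dag *}T^* = P_{R(T)}$; this is precisely where both the closed-range hypothesis on $T$ and the subspace-invariance condition $T^\dagger \overline{T\mathcal W_i} \subset \mathcal W_i$ are actually used. The summability condition $\sum_i w_i^2 < \infty$ plays no explicit role in the bound calculations; it is needed only to ensure that the weighted collection $\{(T\mathcal W_i, w_i)\}_{i \in \mathcal I}$ is well defined as a candidate weighted fusion family.
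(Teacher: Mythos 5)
Your argument is correct, and your lower bound is essentially the paper's: both evaluate the $K$-fusion inequality at $T^*g$, convert $P_{\mathcal W_i}T^*g$ into $P_{\mathcal W_i}T^*P_{T\mathcal W_i}g$ via Lemma \ref{lem-projection}(1), and arrive at the constant $A/\|T\|^4$. Where you genuinely diverge is the upper bound. The paper takes the trivial route $\sum_i w_i^2\|P_{T\mathcal W_i}f\|^2\le\bigl(\sum_i w_i^2\bigr)\|f\|^2$, which is why the hypothesis $\sum_i w_i^2<\infty$ appears in the statement at all; your reading that this hypothesis ``plays no explicit role'' is accurate for your proof but not for the paper's. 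You instead apply Lemma \ref{lem-projection}(2) to $T^{\dag}$ (using the invariance condition $T^{\dag}\overline{T\mathcal W_i}\subset\mathcal W_i$ to get $P_{T\mathcal W_i}T^{\dag*}P_{\mathcal W_i}=P_{T\mathcal W_i}T^{\dag*}$, then $T^{\dag*}T^*=(TT^{\dag})^*=P_{R(T)}$ and $P_{T\mathcal W_i}P_{R(T)}=P_{T\mathcal W_i}$) to obtain $\|P_{T\mathcal W_i}g\|\le\|T^{\dag}\|\,\|P_{\mathcal W_i}T^*g\|$ and hence the Bessel bound $B\|T\|^2\|T^{\dag}\|^2$. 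This is a strictly better argument: it transfers the actual upper frame bound of $\mathcal W_w$ rather than discarding it, it renders the summability hypothesis superfluous, and it is in fact the same mechanism the paper itself deploys later in Lemma \ref{lem_K_intertwine2} via Lemma \ref{analog}. The one point you should not leave ``implicit'' is the closedness of $T\mathcal W_i$: the conclusion asserts a fusion frame of closed subspaces, and the paper spends the first lines of its proof deriving closedness from the hypotheses, namely $\overline{T\mathcal W_i}=TT^{\dag}\overline{T\mathcal W_i}\subset T\mathcal W_i$ since $TT^{\dag}=P_{R(T)}$ acts as the identity on $\overline{T\mathcal W_i}\subset R(T)$. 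Add that one line and your proof is complete.
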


\begin{proof}
First we prove for all $i \in \mathcal I$, $T(\mathcal W_i)$ is a closed subspace in  $\mathcal H_2$. Since $T^{\dag} \overline {T(\mathcal W_i)} \subset \mathcal W_i$, then $T T^{\dag} \overline {T(\mathcal W_i)} \subset T(\mathcal W_i)$. But applying Lemma 2.5.2 of \cite{Ch08} we have $\left.T^{\dag}\right|_{R(T)}=T^* (TT^*)^{-1}$ and hence $\overline {T(\mathcal W_i)} \subset T(\mathcal W_i)$. Therefore, for every $i \in \mathcal I$, $T(\mathcal W_i)$ is a closed subspace in  $\mathcal H_2$. Since $\{(\mathcal W_i, w_i)\}_{i \in \mathcal I}$ is a $K$-fusion frame for $\mathcal{H}_1$, there exist $A, B>0$ so that for every $f \in \mathcal{H}_1$ we have,
\begin{equation}\label{K-f.f.} A \|K^*f\|^2 \leq \sum_{i \in \mathcal I} w_i ^2 \|P_{\mathcal W_i}f\|^2 \leq B \|f\|^2.\end{equation}
Again applying Lemma \ref{lem-projection} and using equation (\ref{K-f.f.}), for every $f \in \mathcal H_2$ we obtain,
\begin{eqnarray*}
\frac {A}{\|T\|^2} \|(TKT^*)^*f\|^2 \leq A \|K^*(T^*f)\|^2 &\leq & \sum \limits_{i \in \mathcal I} w_i^2 \|P_{\mathcal W_i} T^*f\|^2
\\ &= & \sum \limits_{i \in \mathcal I} w_i^2 \|P_{\mathcal W_i} T^* P_{\overline{T\mathcal W_i}}f\|^2
\\ &= & \sum \limits_{i \in \mathcal I} w_i^2 \|P_{\mathcal W_i} T^* P_{T\mathcal W_i}f\|^2
\\& \leq & \|T\|^2 \sum \limits_{i \in \mathcal I} w_i^2 \| P_{T\mathcal W_i}f\|^2
\end{eqnarray*}
and hence $\sum \limits_{i \in \mathcal I} w_i^2 \| P_{T\mathcal W_i}f\|^2 \geq \frac {A}{\|T\|^4} \|(TKT^*)^*f\|^2$.
Furthermore, since $\sum \limits_{i \in \mathcal I} w_i ^2 < \infty$,  for every $f \in \mathcal H_2$ we get,
$\sum \limits_{i \in \mathcal I} w_i^2 \| P_{T\mathcal W_i}f\|^2 \leq (\sum \limits_{i \in \mathcal I} w_i ^2)\|f\|^2.$
\end{proof}

\begin{lem}\label{lem_K_intertwine2}
 Let $\{(\mathcal W_i, w_i)\}_{i \in \mathcal I}$ be a weighted collection of closed subspaces in $\mathcal H_1$ and  $T \in \mathcal L(\mathcal H_1, \mathcal H_2)$ be one-one, closed range operator so that for some $K \in \mathcal  L(\mathcal H_2)$, $\{(T\mathcal W_i, w_i)\}_{i \in \mathcal I}$ be a $K$-fusion frame for $R(T)$. Then $\{(\mathcal W_i, \frac {w_i} {\|T\|})\}_{i \in \mathcal I}$ forms a $T^{\dag} K T$-fusion frame for $\mathcal H_1$.
\end{lem}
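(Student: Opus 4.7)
The plan is to mirror the proof of Lemma \ref{lem_K_intertwine1} but to run every estimate through the pseudoinverse rather than through $T$ itself. Two preliminary identities drive the whole argument: since $T$ is one-one with closed range, Lemma \ref{Moore} gives $T^{\dag} T = P_{R(T^{*})} = I_{\mathcal H_{1}}$ (because $(\ker T)^{\perp} = \mathcal H_{1}$ and $R(T^{*})$ is closed), and taking adjoints yields $T^{*} T^{\dag *} = I_{\mathcal H_{1}}$. From $T^{\dag} = T^{\dag} T T^{\dag}$ one also obtains $T^{\dag *} = (T T^{\dag})^{*} T^{\dag *} = P_{R(T)} T^{\dag *}$, so $T^{\dag *} f \in R(T)$ for every $f \in \mathcal H_{1}$; this is what allows us to feed $T^{\dag *} f$ into the $K$-fusion frame hypothesis for $\{(T\mathcal W_{i}, w_{i})\}$ on $R(T)$.

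For the lower bound, note $(T^{\dag} K T)^{*} = T^{*} K^{*} T^{\dag *}$, so $\|(T^{\dag} K T)^{*} f\|^{2} \leq \|T\|^{2} \|K^{*} T^{\dag *} f\|^{2}$. Applying the lower frame constant $A$ to $T^{\dag *} f \in R(T)$ gives $A \|K^{*} T^{\dag *} f\|^{2} \leq \sum_{i} w_{i}^{2} \|P_{T\mathcal W_{i}} T^{\dag *} f\|^{2}$. Lemma \ref{analog} then collapses $P_{T\mathcal W_{i}} T^{\dag *} f$ to $P_{T\mathcal W_{i}} T^{\dag *} P_{\mathcal W_{i}} f$, bounded above by $\|T^{\dag}\| \|P_{\mathcal W_{i}} f\|$. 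Chaining the inequalities and rescaling by $\|T\|$ produces the desired lower frame bound $A / (\|T\|^{4} \|T^{\dag}\|^{2})$ for $\{(\mathcal W_{i}, w_{i}/\|T\|)\}_{i \in \mathcal I}$.

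The nontrivial step is the Bessel (upper) bound, where one needs to control $\sum_{i} w_{i}^{2} \|P_{\mathcal W_{i}} f\|^{2}$ by $\|f\|^{2}$ for every $f \in \mathcal H_{1}$. The trick is to use $T^{*} T^{\dag *} = I_{\mathcal H_{1}}$ to rewrite $\|P_{\mathcal W_{i}} f\|^{2} = \|P_{\mathcal W_{i}} T^{*} T^{\dag *} f\|^{2}$, and then invoke Lemma \ref{lem-projection}(1) with $T \mathcal W_{i}$ closed (as given) to insert the projection $P_{T\mathcal W_{i}}$, namely $P_{\mathcal W_{i}} T^{*} = P_{\mathcal W_{i}} T^{*} P_{T\mathcal W_{i}}$. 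This gives the pointwise estimate $\|P_{\mathcal W_{i}} f\| \leq \|T\| \|P_{T\mathcal W_{i}} T^{\dag *} f\|$. Summing and applying the Bessel bound $B$ on $R(T)$ to $T^{\dag *} f$ (whose norm is at most $\|T^{\dag}\| \|f\|$) yields $\sum_{i} w_{i}^{2} \|P_{\mathcal W_{i}} f\|^{2} \leq B \|T\|^{2} \|T^{\dag}\|^{2} \|f\|^{2}$; dividing by $\|T\|^{2}$ delivers the Bessel bound $B \|T^{\dag}\|^{2}$.

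The main obstacle is precisely this upper bound: a naive attempt that feeds $Tf$ into the given Bessel inequality fails, since there is no universal estimate of the form $\|P_{\mathcal W_{i}} f\| \leq C \|P_{T\mathcal W_{i}} T f\|$ (small counterexamples already exist in $\mathbb{R}^{2}$, e.g.\ with $T$ a non-isotropic diagonal operator and $\mathcal W_{i}$ a generic line). Replacing $Tf$ by $T^{\dag *} f$ and exploiting the identity $T^{*} T^{\dag *} = I_{\mathcal H_{1}}$ alongside Lemma \ref{lem-projection}(1) is what makes the estimate go through; the lower bound, by contrast, is essentially a direct reuse of Lemma \ref{analog}.
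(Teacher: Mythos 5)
Your proof is correct and takes essentially the same approach as the paper: the paper writes $h_1=T^*h_2$, splits $h_2$ into its $R(T)$ and $R(T)^{\perp}$ components and identifies the former with $T^{\dag *}h_1$, which is exactly your direct use of $T^*T^{\dag *}=I_{\mathcal H_1}$ and $T^{\dag *}f\in R(T)$. Both arguments then obtain the lower bound via Lemma \ref{analog} and the upper bound via Lemma \ref{lem-projection}, arriving at the same constants $A/(\|T\|^{4}\|T^{\dag}\|^{2})$ and $B\|T^{\dag}\|^{2}$.
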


\begin{proof} Since $\{(T\mathcal W_i, w_i)\}_{i \in \mathcal I}$ is a $K$-fusion frame for $R(T)$, there exist $A, B > 0$ so that for every $h_2^{(1)} \in R(T)$ we have,
\begin{equation} \label{K-frame}
	A \|K^* h_2^{(1)}\|^2 \leq \sum \limits_{i \in \mathcal I} w_i ^2 \|P_{T \mathcal W_i} h_2^{(1)}\|^2 \leq B \|h_2^{(1)}\|^2.
	\end{equation}
	
Now since $T$ is one-one and $R(T)$ is closed, for every $h_1 \in \mathcal H_1$ there exists $h_2 \in \mathcal H_2$ so that $h_1=T^{*} h_2$ and  for every $h_2 \in \mathcal H_2$ we have $h_2 = h_2 ^{(1)} + h_2 ^{(2)}$, where $h_2 ^{(1)} \in R(T)$ and $h_2 ^{(2)} \in R(T)^{\perp}$.

Therefore, $ h_2 ^{(1)} = T^{*\dag}(h_1 - T^*h_2 ^{(2)}) = T^{*\dag} h_1$. Hence applying Lemma \ref{analog} we get,
\begin{eqnarray*}
\sum \limits_{i \in \mathcal I} w_i ^2 \|P_{T \mathcal W_i} h_2^{(1)}\|^2 = \sum \limits_{i \in \mathcal I} w_i ^2 \|P_{T \mathcal W_i} T^{\dag *} h_1\|^2 & = & \sum \limits_{i \in \mathcal I} w_i ^2 \|P_{T \mathcal W_i} T^{\dag *} P_{\mathcal W_i} h_1\|^2
\\ & \leq & \|T^{\dag}\|^2\sum \limits_{i \in \mathcal I} w_i ^2 \| P_{\mathcal W_i} h_1\|^2.
\end{eqnarray*}
Consequently, using equation (\ref{K-frame}) for every $h_1 \in \mathcal H_1$ we obtain,
\begin{eqnarray*}
\sum \limits_{i \in \mathcal I} \left (\frac {w_i} {\|T\|} \right )^2 \| P_{\mathcal W_i} h_1\|^2 & \geq & \frac {A} {\|T\|^2 \|T^{\dag}\|^2} \|(T^{\dag} K)^* h_1\|^2
\\ & \geq & \frac {A} {\|T\|^4 \|T^{\dag}\|^2} \|(T^{\dag} K T)^* h_1\|^2 .
\end{eqnarray*}	
Furthermore, applying Lemma \ref{lem-projection} and using equation (\ref{K-frame}) for every $h_1 \in \mathcal H_1$ we get,
\begin{eqnarray*}
\sum \limits_{i \in \mathcal I} \left (\frac {w_i} {\|T\|} \right )^2 \| P_{\mathcal W_i} h_1\|^2 & = & \sum \limits_{i \in \mathcal I} \left (\frac {w_i} {\|T\|} \right )^2 \| P_{\mathcal W_i} T^*h_2\|^2
\\ & = & \sum \limits_{i \in \mathcal I} \left (\frac {w_i} {\|T\|} \right )^2 \| P_{\mathcal W_i} T^* P_{T \mathcal W_i} h_2\|^2
\\ & \leq & \sum \limits_{i \in \mathcal I} w_i ^2 \| P_{T \mathcal W_i} h_2\|^2
\\ & = & \sum \limits_{i \in \mathcal I} w_i ^2 \| P_{T \mathcal W_i} (h_2^{(1)} + h_2^{(2)} )\|^2
\\ & = & \sum \limits_{i \in \mathcal I} w_i ^2 \| P_{T \mathcal W_i} h_2^{(1)}\|^2
\\ & \leq & B\|h_2^{(1)}\|^2
\\& \leq & B \|T^{\dag}\|^2 \|h_1\|^2.
\end{eqnarray*}	
Hence our assertion is tenable.
\end{proof}

As a consequence of Lemma \ref{lem_K_intertwine1} and \ref{lem_K_intertwine2}, the following two propositions show that $K$-woven-ness is preserved under bounded linear operators.

\begin{prop} \label{prop_K_intertwine1} Let $K \in \mathcal L(\mathcal H_1)$ for which $\mathcal W_w =\{(\mathcal W_i, w_i)\}_{i \in \mathcal{I}}$ and $\mathcal V_v =\{(\mathcal V_i, v_i)\}_{i \in \mathcal{I}}$ be $K$-fusion frames for $\mathcal{H}_1$. Further let us consider a closed range operator $T \in \mathcal{L}(\mathcal{H}_1,\mathcal{H}_2)$ with $T^{\dag} \overline {T(\mathcal W_i)} \subset \mathcal W_i$ and $T^{\dag} \overline {T(\mathcal V_i)} \subset \mathcal V_i$, for all $i \in \mathcal I$  for all $i \in \mathcal I$. Suppose $\mathcal W_w$ and $\mathcal V_v$ are weaving  K-fusion frames for $\mathcal{H}_1$, then $\{(T \mathcal W_i, w_i)\}_{i \in \mathcal{I}}$ and $\{(T \mathcal V_i, v_i)\}_{i \in \mathcal{I}}$ are weaving $TKT^*$-fusion frames for $\mathcal{H}_2$.
\end{prop}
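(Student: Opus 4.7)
The plan is to reduce the problem to the single-family estimate of Lemma \ref{lem_K_intertwine1}, applied separately on each piece of an arbitrary partition, and then to reassemble. Fix $\sigma \subset \mathcal I$; the task is to produce constants $A', B' > 0$, independent of $\sigma$, such that for every $f \in \mathcal H_2$
\[
A'\|(TKT^*)^*f\|^2 \;\le\; \sum_{i \in \sigma} w_i^2 \|P_{T\mathcal W_i}f\|^2 + \sum_{i \in \sigma^c} v_i^2 \|P_{T\mathcal V_i}f\|^2 \;\le\; B'\|f\|^2.
\]
As a preliminary I would invoke the opening argument of the proof of Lemma \ref{lem_K_intertwine1}: the hypotheses $T^{\dag}\overline{T(\mathcal W_i)} \subset \mathcal W_i$ and $T^{\dag}\overline{T(\mathcal V_i)} \subset \mathcal V_i$ together with the identity $T^{\dag}|_{R(T)} = T^*(TT^*)^{-1}$ force each $T\mathcal W_i$ and $T\mathcal V_i$ to be closed, so that the projections appearing above are genuinely defined.

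For the upper bound I would apply Lemma \ref{lem_K_intertwine1} twice to obtain Bessel bounds $B_1, B_2$ for $\{(T\mathcal W_i, w_i)\}_{i \in \mathcal I}$ and $\{(T\mathcal V_i, v_i)\}_{i \in \mathcal I}$ as $TKT^*$-fusion Bessel sequences in $\mathcal H_2$, and then invoke Proposition \ref{Bessel}, which guarantees that every weaving of these two Bessel sequences shares the uniform bound $B' = B_1 + B_2$.

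For the lower bound I would start from the weaving hypothesis in $\mathcal H_1$: there is a universal $A > 0$ with
\[
A\|K^*g\|^2 \le \sum_{i \in \sigma} w_i^2 \|P_{\mathcal W_i}g\|^2 + \sum_{i \in \sigma^c} v_i^2 \|P_{\mathcal V_i}g\|^2
\]
for every $g \in \mathcal H_1$. Substituting $g = T^*f$ with $f \in \mathcal H_2$ and mimicking the inner computation of Lemma \ref{lem_K_intertwine1} term-by-term, namely using $P_{\mathcal W_i}T^* = P_{\mathcal W_i}T^* P_{\overline{T\mathcal W_i}} = P_{\mathcal W_i}T^* P_{T\mathcal W_i}$ from Lemma \ref{lem-projection} (and its analogue for $\mathcal V_i$), then bounding $\|P_{\mathcal W_i}T^* P_{T\mathcal W_i}f\| \le \|T\|\,\|P_{T\mathcal W_i}f\|$ and likewise for $\mathcal V_i$, and finally invoking $\|K^*T^*f\|^2 \ge \|(TKT^*)^*f\|^2 / \|T\|^2$, yields $A' = A/\|T\|^4$.

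The main obstacle is really only one of bookkeeping rather than analysis: one must check that each constant extracted above depends only on $A$, $B_1$, $B_2$, and $\|T\|$, and never on $\sigma$. Since every inequality used is applied term-by-term and $T$ acts identically on $\sigma$- and $\sigma^c$-indexed terms, the resulting $A'$ and $B'$ are automatically uniform, so no further work is required.
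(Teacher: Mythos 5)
Your proposal is correct and follows the same route the paper intends: the paper's entire proof is the single sentence ``Applying Lemma \ref{lem_K_intertwine1}, our assertion is tenable,'' and your write-up is the honest expansion of that citation. In fact you supply the one piece of content the paper's one-liner glosses over: invoking Lemma \ref{lem_K_intertwine1} verbatim only shows that each family $\{(T\mathcal W_i,w_i)\}$ and $\{(T\mathcal V_i,v_i)\}$ is individually a $TKT^*$-fusion frame, whereas the weaving claim requires running the lemma's inner computation on the mixed sum for an arbitrary $\sigma$, starting from the universal weaving bound $A$ in $\mathcal H_1$; your term-by-term argument with $g=T^*f$ does exactly this and correctly observes that the resulting constant $A/\|T\|^4$ is independent of $\sigma$. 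One caveat you inherit from the paper rather than introduce: the upper bound obtained by feeding Lemma \ref{lem_K_intertwine1} into Proposition \ref{Bessel} tacitly uses the lemma's hypothesis $\sum_{i}w_i^2<\infty$ (and its analogue for the $v_i$), which the proposition's statement does not restate; without it the Bessel bounds $B_1,B_2$ are not available by the quoted argument.
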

\begin{proof}
Applying Lemma \ref{lem_K_intertwine1}, our assertion is tenable.
\end{proof}

\begin{prop}\label{prop_K_intertwine2}
Let $\{(\mathcal W_i, w_i)\}_{i \in \mathcal{I}}$ and $\{(\mathcal V_i, v_i)\}_{i \in \mathcal{I}}$ be two weighted collections of closed subspaces in $\mathcal{H}_1$. Suppose $T \in \mathcal L(\mathcal H_1, \mathcal H_2)$ to be one-one, closed range operator so that for some $K \in \mathcal  L(\mathcal H_2)$, $\{(T \mathcal W_i, w_i)\}_{i \in \mathcal{I}}$ and $\{(T \mathcal V_i, v_i)\}_{i \in \mathcal{I}}$  are weaving $K$-fusion frames for $R(T)$ with the universal bounds $A, B$. Then $\{(\mathcal W_i, \frac {w_i} {\|T\|})\}_{i \in \mathcal{I}}$ and $\{(\mathcal V_i, \frac {v_i} {\|T\|})\}_{i \in \mathcal{I}}$ are  weaving $T^{\dag} K T$-fusion frames for $\mathcal H_1$ with the universal bounds $\frac {A} {\|T\|^4 \|T\|^2}$, $B \|T^{\dag}\|^2$.
\end{prop}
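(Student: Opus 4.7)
The plan is to adapt the proof of Lemma \ref{lem_K_intertwine2} almost verbatim, performed uniformly across all subsets $\sigma \subset \mathcal I$. Fix an arbitrary $\sigma \subset \mathcal I$ and form the weaving $\mathcal U_\sigma := \{(T \mathcal W_i, w_i)\}_{i \in \sigma} \cup \{(T \mathcal V_i, v_i)\}_{i \in \sigma^c}$. By the weaving hypothesis, $\mathcal U_\sigma$ is a $K$-fusion frame for $R(T)$ with bounds $A, B$ that are independent of $\sigma$. Our task is then to apply the same transfer mechanism used in Lemma \ref{lem_K_intertwine2} to $\mathcal U_\sigma$ and check that the output bounds involve only $A, B, \|T\|, \|T^{\dag}\|$ (and hence are again universal in $\sigma$).

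The concrete steps I would carry out are as follows. First, for an arbitrary $h_1 \in \mathcal H_1$, use that $T$ is one-one with closed range to write $h_1 = T^* h_2$ for some $h_2 \in \mathcal H_2$, and decompose $h_2 = h_2^{(1)} + h_2^{(2)}$ with $h_2^{(1)} \in R(T)$, $h_2^{(2)} \in R(T)^\perp$, so that $h_2^{(1)} = T^{\dag *} h_1$ exactly as in Lemma \ref{lem_K_intertwine2}. Second, for the lower bound, split
\begin{equation*}
\sum_{i \in \sigma} w_i^2 \|P_{T \mathcal W_i} h_2^{(1)}\|^2 + \sum_{i \in \sigma^c} v_i^2 \|P_{T \mathcal V_i} h_2^{(1)}\|^2 \geq A \|K^* h_2^{(1)}\|^2,
\end{equation*}
apply Lemma \ref{analog} (with $\mathcal W_i$ replaced by $\mathcal V_i$ for $i \in \sigma^c$) to insert $P_{\mathcal W_i}$ or $P_{\mathcal V_i}$ inside the norms on the left, and estimate by $\|T^{\dag}\|^2$ times the corresponding sum over the original subspaces in $\mathcal H_1$. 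Rewriting $\|K^* h_2^{(1)}\|^2 = \|K^* T^{\dag *} h_1\|^2$ and comparing with $\|(T^{\dag} K T)^* h_1\|^2 = \|T^* K^* T^{\dag *} h_1\|^2 \le \|T\|^2 \|K^* T^{\dag *} h_1\|^2$ yields the lower bound $\frac{A}{\|T\|^4\|T^{\dag}\|^2}\|(T^{\dag} K T)^* h_1\|^2$ after dividing by $\|T\|^2$ to accommodate the reweighting $w_i \mapsto w_i/\|T\|$, $v_i \mapsto v_i/\|T\|$.

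Third, for the upper bound, insert $P_{T \mathcal W_i}$ or $P_{T \mathcal V_i}$ via Lemma \ref{lem-projection}(1) to pass from sums involving $P_{\mathcal W_i} T^*$, $P_{\mathcal V_i} T^*$ acting on $h_2$ to sums involving $P_{T\mathcal W_i} h_2$, $P_{T\mathcal V_i} h_2$, observe that $P_{T \mathcal W_i}$ and $P_{T \mathcal V_i}$ kill the $R(T)^\perp$ component $h_2^{(2)}$, apply the upper frame inequality on $R(T)$ to bound the total by $B \|h_2^{(1)}\|^2 \leq B \|T^{\dag}\|^2 \|h_1\|^2$.

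I do not expect a real obstacle here: the argument is structurally identical to Lemma \ref{lem_K_intertwine2}, and the only thing that needs verification is that the constants depend only on the universal weaving bounds $A, B$ and on $\|T\|, \|T^{\dag}\|$, so that they are themselves universal in $\sigma$. The one minor bookkeeping point is that applications of Lemma \ref{analog} need the hypothesis that each $T(\mathcal W_i)$ and $T(\mathcal V_i)$ is closed; this is inherited from the assumption that $\mathcal U_\sigma$ is a $K$-fusion frame for $R(T)$ (so its constituent subspaces are closed by definition). Once these ingredients are in place the conclusion follows, with the stated universal bounds (interpreting the lower bound $\frac{A}{\|T\|^4 \|T\|^2}$ as $\frac{A}{\|T\|^4 \|T^{\dag}\|^2}$ in line with Lemma \ref{lem_K_intertwine2}).
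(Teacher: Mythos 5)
Your proposal is correct and follows essentially the same route as the paper, which simply invokes Lemma \ref{Moore} and Lemma \ref{lem_K_intertwine2} applied to each weaving and notes that the resulting bounds depend only on $A$, $B$, $\|T\|$, $\|T^{\dag}\|$ and hence are universal in $\sigma$. Your reading of the stated lower bound $\frac{A}{\|T\|^4\|T\|^2}$ as a typo for $\frac{A}{\|T\|^4\|T^{\dag}\|^2}$ is also consistent with Lemma \ref{lem_K_intertwine2}.
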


\begin{proof} The proof will be followed from Lemma \ref{Moore} and Lemma \ref{lem_K_intertwine2}.
%Since $\mathcal{F}$ and $\mathcal{G}$ are $K$-frames for $\mathcal H_1$, then  by Lemma \ref{lem_K_intertwine1}, $T\mathcal{F}$ and $T\mathcal{G}$ are $TKT^*$-frame for $\mathcal H_2$.
%	For every $f \in \mathcal{H}_2$ and for every $\sigma \subset \mathcal I$ we obatain,	
%	\begin{eqnarray*}
%	&\sum\limits &_{i \in \sigma} |\langle f,Tf_i \rangle|^2 + \sum\limits_{i \in \sigma^c} |\langle f,Tg_i \rangle|^2 \\& = &\sum\limits_{i \in \sigma} | \langle T^*f,f_i \rangle|^2 + \sum\limits_{i \in \sigma^c} |  \langle T^*f,g_i \rangle |^2\\
%	& \geq & A\|K^*T^*f\|^2 \\
%	& \geq &  \frac {A} {\|T\|^2} \|TK^*T^*f\|^2 \\
%& = &  \frac {A} {\|T\|^2} \|(TKT^*)^* f\|^2.
%\end{eqnarray*}
%The universal upper bound will be achieved by Proposition \ref{Bessel}.
\end{proof}

In the following result we discuss images of weaving fusion frames under bounded, linear operator preserve their woven-ness with respect to the said operator.

\begin{prop}\label{equivalency1}
	Let $\{(\mathcal W_i, w_i)\}_{i \in \mathcal I}$ and $\{(\mathcal V_i, v_i)\}_{i \in \mathcal I}$ be weaving fusion frames for $\mathcal H$. Then for every $K \in \mathcal L(\mathcal H)$, $\{(\overline{K\mathcal W_i}, w_i)\}_{i \in \mathcal I}$ and $\{(\overline{K\mathcal V_i}, v_i)\}_{i \in \mathcal I}$ are weaving $K$-fusion frames for $\mathcal H$.
\end{prop}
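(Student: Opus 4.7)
The plan is to fix an arbitrary $\sigma\subset\mathcal I$ and $f\in\mathcal H$ and establish, uniformly in $\sigma$, both sides of the $K$-fusion weaving inequality for the transformed pair. Writing
\[
S_{\sigma}(f):=\sum_{i\in\sigma}w_{i}^{2}\|P_{\overline{K\mathcal W_{i}}}f\|^{2}+\sum_{i\in\sigma^{c}}v_{i}^{2}\|P_{\overline{K\mathcal V_{i}}}f\|^{2},
\]
my aim is to produce universal constants $A',B'>0$ (depending only on the weaving bounds $A,B$ of $\mathcal W_{w},\mathcal V_{v}$ and on $\|K\|$) such that $A'\|K^{*}f\|^{2}\le S_{\sigma}(f)\le B'\|f\|^{2}$ for every $\sigma$.

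For the lower bound, the essential ingredient is Lemma~\ref{lem-projection}(1) applied with $T=K$ and $\mathcal W=\mathcal W_{i}$ (and separately $\mathcal W=\mathcal V_{i}$), which yields the identity $P_{\mathcal W_{i}}K^{*}=P_{\mathcal W_{i}}K^{*}P_{\overline{K\mathcal W_{i}}}$ and its $\mathcal V$-analogue. Passing to norms gives the pointwise comparison $\|P_{\mathcal W_{i}}K^{*}f\|\le\|K\|\,\|P_{\overline{K\mathcal W_{i}}}f\|$, and the same for $\mathcal V_{i}$. Applying the hypothesized weaving lower bound of $\mathcal W_{w},\mathcal V_{v}$ to the vector $K^{*}f$ in place of $f$ produces
\[
A\|K^{*}f\|^{2}\le\sum_{i\in\sigma}w_{i}^{2}\|P_{\mathcal W_{i}}K^{*}f\|^{2}+\sum_{i\in\sigma^{c}}v_{i}^{2}\|P_{\mathcal V_{i}}K^{*}f\|^{2},
\]
and combining with the pointwise comparison yields $A\|K^{*}f\|^{2}\le\|K\|^{2}S_{\sigma}(f)$, i.e.\ a universal lower $K$-fusion constant $A/\|K\|^{2}$.

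For the upper bound I plan to bootstrap through Proposition~\ref{Bessel}. It suffices to check that each of $\{(\overline{K\mathcal W_{i}},w_{i})\}_{i\in\mathcal I}$ and $\{(\overline{K\mathcal V_{i}},v_{i})\}_{i\in\mathcal I}$ is a $K$-fusion Bessel sequence individually; the trivial estimate $\|P_{\overline{K\mathcal W_{i}}}f\|\le\|f\|$ together with summability of the weights (implicit in the hypothesis) provides individual Bessel bounds, after which Proposition~\ref{Bessel} delivers the uniform upper bound $B'$ on every weaving.

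The principal obstacle is the lower bound: there is no a priori reason that a projection onto $\mathcal W_{i}$ of $K^{*}f$ should be controlled by a projection onto $\overline{K\mathcal W_{i}}$ of $f$, and without such a bridge the hypothesis on $\mathcal W_{w},\mathcal V_{v}$ cannot be transferred through $K$. Lemma~\ref{lem-projection}(1) supplies precisely this bridge, and once it is invoked, the remainder of the argument is a routine combination of the weaving hypothesis applied to $K^{*}f$ with Proposition~\ref{Bessel} for the Bessel side.
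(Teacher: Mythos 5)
Your lower-bound argument coincides with the paper's: apply the weaving fusion-frame lower bound to the vector $K^*f$, then use Lemma~\ref{lem-projection}(1) in the form $P_{\mathcal W_i}K^*=P_{\mathcal W_i}K^*P_{\overline{K\mathcal W_i}}$ (and its $\mathcal V_i$-analogue) to get $\|P_{\mathcal W_i}K^*f\|\le\|K\|\,\|P_{\overline{K\mathcal W_i}}f\|$, which yields the universal lower constant $A/\|K\|^2$. That part is correct and is exactly the route taken in the paper.

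The gap is in your upper bound. You justify the individual Besselness of $\{(\overline{K\mathcal W_i},w_i)\}_{i\in\mathcal I}$ by the trivial estimate $\|P_{\overline{K\mathcal W_i}}f\|\le\|f\|$ ``together with summability of the weights (implicit in the hypothesis).'' Square-summability of the weights is \emph{not} implicit in the hypothesis that the two families are weaving fusion frames: for an orthonormal basis $\{e_i\}$ the collection $\mathcal W_i=\operatorname{span}\{e_i\}$ with $w_i=1$ is a Parseval fusion frame (and is woven with itself), yet $\sum_i w_i^2=\infty$. So your verification of the Bessel hypothesis of Proposition~\ref{Bessel} collapses, and with it your upper bound. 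The paper instead invokes Proposition~\ref{Bessel} directly on the image families; note that this, too, silently assumes that each of $\{(\overline{K\mathcal W_i},w_i)\}$ and $\{(\overline{K\mathcal V_i},v_i)\}$ is individually a fusion Bessel sequence, which is not automatic: with $\mathcal W_i$ as above and $K f=\langle f,v\rangle e_1$ for $v=\sum_i 2^{-i}e_i$, one has $\overline{K\mathcal W_i}=\operatorname{span}\{e_1\}$ for every $i$, so $\sum_i w_i^2\|P_{\overline{K\mathcal W_i}}e_1\|^2=\infty$. Thus the upper bound genuinely requires either an additional hypothesis (e.g.\ $\sum_i w_i^2<\infty$, stated as an assumption rather than claimed to follow) or a separate argument; the concrete false step in your write-up is the assertion that weight summability comes for free.
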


\begin{proof}
 Let $\{(\mathcal W_i, w_i)\}_{i \in \mathcal I}$ and $\{(\mathcal V_i, v_i)\}_{i \in \mathcal I}$ be weaving fusion frames for $\mathcal H$ with the universal bounds $A, B$. Then for every $\sigma \subset \mathcal I$ and  $f \in \mathcal H$ we have,
\begin{equation}\label{K-f.f. 1} A \|f\|^2 \leq \sum_{i \in \sigma} w_i ^2 \|P_{\mathcal W_i}f\|^2 + \sum_{i \in \sigma ^c} v_i ^2 \|P_{\mathcal V_i}f\|^2 \leq B \|f\|^2.\end{equation}	
Therefore,  using equation (\ref{K-f.f. 1}) and applying Lemma \ref{lem-projection}, for every $K \in \mathcal L(\mathcal H)$, $\sigma \subset \mathcal I$ and  $f \in \mathcal H$ we obtain,
$$\sum_{i \in \sigma} w_i ^2 \|P_{\overline{K\mathcal W_i}}f\|^2 + \sum_{i \in \sigma ^c} v_i ^2 \|P_{\overline{K \mathcal V_i}}f\|^2 \geq \frac {A} {\|K\|^2} \|K^* f\|^2.$$
	The universal upper bound of the respective weaving will achieved by Proposition \ref{Bessel}.
\end{proof}

Next result provides a characterization of weaving fusion frames by means of weaving $K$-fusion frames and conversely.

\begin{prop} \label{equivalency2}
Let $K \in \mathcal L(\mathcal H)$ and consider two weighted collections $\mathcal W_w$, $\mathcal V_v$ of closed subspaces of $\mathcal H$. Then
\begin{enumerate}[label=(\roman*)]

\item $\mathcal W_w$ and $\mathcal V_v$ are weaving $K$-fusion frames for $\mathcal H$ whenever they form weaving fusion frames for  $\mathcal H$.

\item if  $R(K)$ is closed, then $\mathcal W_w$ and $\mathcal V_v$ form weaving fusion frames for $R(K)$ whenever they are weaving $K$-fusion frames for $R(K)$.
\end{enumerate}
\end{prop}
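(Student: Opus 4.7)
For part (i), the plan is a direct comparison of bounds. Assuming $\mathcal W_w$ and $\mathcal V_v$ are weaving fusion frames for $\mathcal H$ with universal bounds $A,B$, I would simply apply the operator norm inequality $\|K^*f\|^2 \le \|K\|^2\|f\|^2$ to convert the ordinary lower bound $A\|f\|^2$ into a $K$-type lower bound $\frac{A}{\|K\|^2}\|K^*f\|^2$. The upper Bessel bound $B\|f\|^2$ is of course unchanged, so for every $\sigma\subset\mathcal I$ and every $f\in\mathcal H$,
\[
\tfrac{A}{\|K\|^{2}}\|K^{*}f\|^{2}\;\le\;\sum_{i\in\sigma}w_i^{2}\|P_{\mathcal W_i}f\|^{2}+\sum_{i\in\sigma^{c}}v_i^{2}\|P_{\mathcal V_i}f\|^{2}\;\le\;B\|f\|^{2},
\]
which is precisely the $K$-woven inequality. (One should take $\|K\|>0$, which is automatic unless $K=0$.)

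For part (ii), the plan is symmetric, but now the lower bound must be promoted from $K^{*}$-form to ordinary $\|f\|^{2}$-form, so the pseudo-inverse estimate is essential. Since $R(K)$ is closed, $K^{\dag}$ is bounded, and Lemma \ref{Moore}(2) applied to $K$ gives
\[
\|K^{*}f\|\;\ge\;\tfrac{\|f\|}{\|K^{\dag}\|}\qquad\text{for every }f\in K(\mathcal H)=R(K).
\]
Plugging this into the weaving $K$-fusion frame inequality (which is assumed to hold on $R(K)$) produces, for every $\sigma\subset\mathcal I$ and every $f\in R(K)$,
\[
\tfrac{A}{\|K^{\dag}\|^{2}}\|f\|^{2}\;\le\;A\|K^{*}f\|^{2}\;\le\;\sum_{i\in\sigma}w_i^{2}\|P_{\mathcal W_i}f\|^{2}+\sum_{i\in\sigma^{c}}v_i^{2}\|P_{\mathcal V_i}f\|^{2}\;\le\;B\|f\|^{2},
\]
which gives the weaving fusion frame property on $R(K)$ with universal bounds $A/\|K^{\dag}\|^{2}$ and $B$.

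Both directions are short and mainly bookkeeping; I do not anticipate a real obstacle. The only thing worth flagging is that part (ii) genuinely needs the closed-range hypothesis, since without it $K^{\dag}$ may fail to be bounded and the inequality $\|K^{*}f\|\ge \|f\|/\|K^{\dag}\|$ loses its content. The role of part (i) being an unconditional implication, while part (ii) requires the closed-range condition and restricts to $R(K)$, mirrors the usual asymmetry between frames and $K$-frames: an ordinary frame bound always upgrades to a $K$-frame bound via $\|K\|$, but the reverse implication forces one to restrict to the range of $K$ and to invoke a pseudo-inverse estimate.
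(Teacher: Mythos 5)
Your proposal is correct and follows essentially the same route as the paper: part (i) via the estimate $\|K^*f\|^2\le\|K\|^2\|f\|^2$ sandwiched against the ordinary weaving bounds, and part (ii) via the pseudo-inverse inequality $\|K^*f\|\ge\|f\|/\|K^{\dag}\|$ on $R(K)$ from Lemma~\ref{Moore}. The bounds you obtain ($A/\|K\|^2$, $B$ and $A/\|K^{\dag}\|^2$, $B$) match the paper's.
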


\begin{proof}
\begin{enumerate}[label=(\roman*)]
\item Let $\mathcal W_w$ and $\mathcal V_v$ be weaving fusion frames for $\mathcal H$ with the universal bounds $A, B$. Then for every $\sigma \subset \mathcal I$ and  $f \in \mathcal H$ we get,
$$\frac {A} {\|K\|^2} \|K^*f\|^2 \leq A \|f\|^2 \leq \sum_{i \in \sigma} w_i ^2 \|P_{\mathcal W_i}f\|^2 + \sum_{i \in \sigma ^c} v_i ^2 \|P_{\mathcal V_i}f\|^2 \leq B \|f\|^2.$$

\item Suppose $\mathcal W_w$ and $\mathcal V_v$ are weaving $K$-fusion frames for $R(K)$ with the universal bounds $C, D$. Then for every $\sigma \subset \mathcal I$ and  $f \in \mathcal H$ we have,
\begin{equation} \label{K-f.f. 2}
C \|K^*f\|^2  \leq \sum_{i \in \sigma} w_i ^2 \|P_{\mathcal W_i}f\|^2 + \sum_{i \in \sigma ^c} v_i ^2 \|P_{\mathcal V_i}f\|^2 \leq D \|f\|^2.
\end{equation}
Again using closed range property for every $f \in R(K)$ we have, $ \|K^*f\|^2 \geq \frac {1} {\|K^{\dag}\|^2} \|f\|^2$. Therefore, using equation (\ref{K-f.f. 2}) we obtain,
$$\frac {C} {\|K^{\dag}\|^2} \|f\|^2 \leq \sum_{i \in \sigma} w_i ^2 \|P_{\mathcal W_i}f\|^2 + \sum_{i \in \sigma ^c} v_i ^2 \|P_{\mathcal V_i}f\|^2 \leq D \|f\|^2.$$
\end{enumerate}
\end{proof}

In the following results we discuss stability of woven-ness of $K$-fusion frames under perturbation and erasures. Analogous erasure result for frame can be observed in \cite{CaLy15}.

\begin{thm} \label{pert}
	Let $T, K \in \mathcal{L}(\mathcal H)$ with $K$ has closed range  and suppose for every $f \in \mathcal H$ we have, $\|(T^* - K^*)f\| \leq \alpha_1 \|T^*f\| + \alpha_2 \|K^*f\| + \alpha_3 \|f\|$, for some $\alpha_1, \alpha_2, \alpha_3  \in (0,1)$. Then $\{(\mathcal W_i, w_i)\}_{i \in \mathcal{I}}$ and $\{(\mathcal V_i, v_i)\}_{i \in \mathcal{I}}$ are weaving $T$-fusion frames if  they are weaving $K$-fusion frames for $R(K)$.
\end{thm}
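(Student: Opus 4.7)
The plan is to bridge the desired $T$-weaving inequality with the hypothesised $K$-weaving inequality via the Paley--Wiener estimate, closing the residual gap on $R(K)$ by means of Lemma \ref{Moore}(2). Let $C,D$ denote the universal constants witnessing that $\mathcal W_w$ and $\mathcal V_v$ are weaving $K$-fusion frames for $R(K)$, so that
\[
C\|K^*f\|^2 \leq \sum_{i \in \sigma} w_i^2\|P_{\mathcal W_i}f\|^2 + \sum_{i \in \sigma^c} v_i^2\|P_{\mathcal V_i}f\|^2 \leq D\|f\|^2
\]
for every $\sigma \subset \mathcal I$ and every $f \in \mathcal H$. The universal upper bound $B=D$ is then already in the correct form and requires no further work; the whole task reduces to producing a lower bound of the form $A\|T^*f\|^2$.

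Combining the triangle inequality $\|T^*f\| \leq \|K^*f\| + \|(T^*-K^*)f\|$ with the perturbation hypothesis and rearranging yields
\[
(1-\alpha_1)\|T^*f\| \leq (1+\alpha_2)\|K^*f\| + \alpha_3\|f\|
\]
for all $f \in \mathcal H$. The main obstacle here is the additive term $\alpha_3\|f\|$: were it absent, one would immediately obtain $\|T^*f\|^2 \lesssim \|K^*f\|^2$ globally and be done. On $N(K^*)=R(K)^\perp$ the quantity $\|K^*f\|$ vanishes while $\|T^*f\|$ need not, so no such global comparison can exist, and the closed range assumption on $K$ is forced into the argument.

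For $f \in R(K)$, Lemma \ref{Moore}(2) supplies $\|f\| \leq \|K^\dag\|\,\|K^*f\|$, which absorbs the problematic term and yields a pointwise inequality $\|T^*f\|^2 \leq M\|K^*f\|^2$ on $R(K)$ for an explicit constant $M$ depending only on $\alpha_1,\alpha_2,\alpha_3$ and $\|K^\dag\|$. Chaining this with the weaving $K$-fusion lower bound produces
\[
A\|T^*f\|^2 \leq \sum_{i \in \sigma} w_i^2\|P_{\mathcal W_i}f\|^2 + \sum_{i \in \sigma^c} v_i^2\|P_{\mathcal V_i}f\|^2
\]
with $A = C/M$, uniformly in $\sigma \subset \mathcal I$ and in $f \in R(K)$. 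Together with the upper bound from the first paragraph, this delivers the desired weaving $T$-fusion frame property on $R(K)$, which is the natural domain imposed by the use of $K^\dag$.
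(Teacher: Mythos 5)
Your proposal is correct and follows essentially the same route as the paper: the reverse triangle inequality combined with the perturbation hypothesis gives $(1-\alpha_1)\|T^*f\| \leq (1+\alpha_2)\|K^*f\| + \alpha_3\|f\|$, the term $\alpha_3\|f\|$ is absorbed on $R(K)$ via Lemma \ref{Moore}(2), and the resulting comparison $\|T^*f\|^2 \leq M\|K^*f\|^2$ is chained with the weaving $K$-fusion lower bound, yielding exactly the paper's constant $M = \bigl((1+\alpha_2+\alpha_3\|K^{\dag}\|)/(1-\alpha_1)\bigr)^2$. Your remark that the lower bound is only obtained on $R(K)$ (the natural domain forced by the use of $K^{\dag}$) is also consistent with what the paper actually proves.
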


\begin{proof}
	Let $\{(\mathcal W_i, w_i)\}_{i \in \mathcal{I}}$ and $\{(\mathcal V_i, v_i)\}_{i \in \mathcal{I}}$ be weaving $K$-fusion frames with the universal bounds $A, B$. Then for every $\sigma \subset \mathcal{I}$ and every $f \in R(K)$ we have,
	\begin{eqnarray} \label{paley}
~~~~~~A\|K^*f\|^2 \leq \sum_{i \in \sigma} w_i ^2 \|P_{\mathcal W_i}f\|^2 + \sum_{i \in \sigma ^c} v_i ^2 \|P_{\mathcal V_i}f\|^2  \leq B \|f\|^2.
	\end{eqnarray}
	
Again for every $f \in \mathcal{H}$ we have, $\|K^*f\| \geq  \|T^*f\| - \|(T^* - K^*)f\|$  and hence applying closed range property of $K$ (see Lemma \ref{Moore}) and employing given perturbation condition for every $f \in R(K)$ we obtain,
	$$(1 - \alpha_1) \|T^*f\| \leq (1 + \alpha_2 + \alpha_3 \|K^{\dag}\|) \|K^*f\|.$$
	Therefore, using  equation (\ref{paley}), for every $f \in R(K)$ and every $\sigma \subset \mathcal{I} $ we obtain,
	\begin{eqnarray*}
 A \left (\frac{1 - \alpha_1} { 1+ \alpha_2 + \alpha_3 \|K^{\dag}\|}\right)^2 \|T^*f\|^2
\leq  \sum_{i \in \sigma} w_i ^2 \|P_{\mathcal W_i}f\|^2 + \sum_{i \in \sigma ^c} v_i ^2 \|P_{\mathcal V_i}f\|^2
 \leq  B\|f\|^2.
	\end{eqnarray*}
	
	%Conversely, for every $f \in \mathcal{H}$ and every $\sigma \subset \mathcal{I} $ we obtain,
	%\begin{eqnarray*} & A & \left( \frac{1 - \alpha_2 - \alpha_3 \|K^{* \dag}\|}{1 + \alpha_1} \right)^2 \|K^*f\|^2
%\\&\leq & A \|T^*f\|^2
%\\ & \leq & \sum\limits_{i \in \sigma} |\langle f, f_i \rangle|^2 + \sum\limits_{i \in {\sigma}^c} | \langle f, g_i \rangle|^2.
	%\end{eqnarray*}
	%The upper bound will be executed by Proposition \ref{Bessel}.	
\end{proof}

\begin{cor}
Let $T, K \in \mathcal{L}(\mathcal H)$ and suppose $\alpha_1, \alpha_2 \in (0,1)$ so that for every $f \in \mathcal H$ we have,
$\|T^* f - K^*f\| \leq \alpha_1 \|T^* f\| + \alpha_2 \|K^*f\|$. Then  $\{(\mathcal W_i, w_i)\}_{i \in \mathcal{I}}$ and $\{(\mathcal V_i, v_i)\}_{i \in \mathcal{I}}$ are $T$-woven if and only if they are $K$-woven.
\end{cor}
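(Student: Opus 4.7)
The plan is to deduce this corollary directly from Theorem \ref{pert} by specializing to $\alpha_3 = 0$ and exploiting the evident symmetry of the hypothesis in $T$ and $K$.

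First I would note that with $\alpha_3 = 0$ the key estimate in the proof of Theorem \ref{pert}, obtained via the reverse triangle inequality
\[
\|K^* f\| \;\geq\; \|T^* f\| - \|(T^* - K^*)f\| \;\geq\; (1 - \alpha_1)\|T^* f\| - \alpha_2 \|K^* f\|,
\]
rearranges to $(1 - \alpha_1)\|T^*f\| \leq (1 + \alpha_2)\|K^*f\|$ and now holds for \emph{every} $f \in \mathcal H$. In particular, the closed-range hypothesis used in Theorem \ref{pert} to pass from $R(K)$ to $\mathcal H$ via $\|f\| \leq \|K^{\dagger}\|\|K^* f\|$ is no longer required, so the estimate is global.

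For the implication from $K$-woven to $T$-woven, let $A, B$ be universal bounds for the $K$-weaving. Substituting the global bound $\|K^*f\|^2 \geq \bigl(\tfrac{1 - \alpha_1}{1 + \alpha_2}\bigr)^2 \|T^*f\|^2$ into the lower $K$-fusion frame inequality and leaving the upper bound $B\|f\|^2$ unchanged yields the desired $T$-woven inequalities, with universal bounds $A\bigl(\tfrac{1-\alpha_1}{1+\alpha_2}\bigr)^2$ and $B$.

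For the reverse implication, observe that the hypothesis reads identically as $\|K^*f - T^*f\| \leq \alpha_2 \|K^*f\| + \alpha_1 \|T^*f\|$, which is precisely the same condition with the pairs $(T, \alpha_1)$ and $(K, \alpha_2)$ interchanged. Rerunning the same argument in this symmetric form converts $T$-wovenness back into $K$-wovenness. I do not anticipate any real obstacle here; the only subtlety worth spelling out is that dropping the $\alpha_3 \|f\|$ term really is what allows us to bypass closed-range assumptions on both $T$ and $K$, since the reverse-triangle computation above is self-contained and holds pointwise on $\mathcal H$ in both directions.
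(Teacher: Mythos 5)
Your argument is correct and is exactly the route the paper intends (the corollary is left without proof, but it is clearly meant to follow from the reverse-triangle estimate in Theorem \ref{pert} specialized to $\alpha_3=0$). You rightly observe that dropping the $\alpha_3\|f\|$ term makes the inequality $(1-\alpha_1)\|T^*f\|\leq(1+\alpha_2)\|K^*f\|$ global and removes the need for any closed-range hypothesis, and the symmetric rearrangement $(1-\alpha_2)\|K^*f\|\leq(1+\alpha_1)\|T^*f\|$ gives the converse.
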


\begin{thm}\label{Thm:erasure1}
Let $K \in \mathcal L(\mathcal H_1)$ for which $\{(\mathcal W_i, w_i)\}_{i \in \mathcal{I}}$ and $\{(\mathcal V_i, v_i)\}_{i \in \mathcal{I}}$ be weaving $K$-fusion frames for $\mathcal H_1$  with universal lower bound $A$ and suppose $T \in \mathcal{L}(\mathcal{H}_1,\mathcal{H}_2)$ with $T^{\dag} \overline {T(\mathcal W_i)} \subset \mathcal W_i$ and $T^{\dag} \overline {T(\mathcal V_i)} \subset \mathcal V_i$ for all $i \in \mathcal I$ . Let us assume $ \mathcal{J} \subset \mathcal{I}$ and $ 0<C<\frac{A}{\|T\|^2}$ so that for every  $ f \in \mathcal{H}_2 $
	\begin{eqnarray} \label{inequa K-f.f.}
	\sum\limits_{i \in \mathcal J} w_i ^2 \|P_{T\mathcal W_i }\|^2 \leq C \|TK^*T^*f\|^2.
	\end{eqnarray}
	Then $\{(T\mathcal W_i, w_i)\}_{i \in \mathcal I \setminus \mathcal J}$ and $\{(T\mathcal V_i, v_i)\}_{i \in \mathcal I \setminus \mathcal J}$ form weaving $TKT^*$-fusion frames for $\mathcal H_2$ .
\end{thm}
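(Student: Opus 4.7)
The plan is to reduce the erasure problem to the non-erasure statement already available via Proposition \ref{prop_K_intertwine1}, and then use subadditivity of the partial sums to peel off the indices in $\mathcal J$.

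First, since $\mathcal W_w$ and $\mathcal V_v$ are weaving $K$-fusion frames for $\mathcal H_1$ with universal lower bound $A$, and the operator $T$ satisfies $T^{\dag}\overline{T(\mathcal W_i)}\subset\mathcal W_i$ and $T^{\dag}\overline{T(\mathcal V_i)}\subset\mathcal V_i$, Proposition \ref{prop_K_intertwine1} applies verbatim to each weaving. Repeating the estimate in the proof of Lemma \ref{lem_K_intertwine1} (namely $\|K^*T^*f\|\ge \|T\|^{-1}\|TK^*T^*f\|$ and $\|P_{\mathcal W_i}T^*f\|=\|P_{\mathcal W_i}T^*P_{T\mathcal W_i}f\|\le\|T\|\|P_{T\mathcal W_i}f\|$, and similarly for $\mathcal V_i$), I obtain that for every $\tau\subset\mathcal I$ and every $f\in\mathcal H_2$,
\begin{equation*}
\sum_{i\in\tau} w_i^{2}\|P_{T\mathcal W_i}f\|^{2}+\sum_{i\in\tau^{c}} v_i^{2}\|P_{T\mathcal V_i}f\|^{2}\ \ge\ \frac{A}{\|T\|^{4}}\,\|(TKT^{*})^{*}f\|^{2}.
\end{equation*}

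Next, for an arbitrary $\sigma\subset\mathcal I\setminus\mathcal J$ I apply the above inequality with the specific partition $\tau=\sigma\cup\mathcal J$, $\tau^{c}=(\mathcal I\setminus\mathcal J)\setminus\sigma$, and split off $\mathcal J$ by subtraction:
\begin{equation*}
\sum_{i\in\sigma} w_i^{2}\|P_{T\mathcal W_i}f\|^{2}+\sum_{i\in(\mathcal I\setminus\mathcal J)\setminus\sigma} v_i^{2}\|P_{T\mathcal V_i}f\|^{2}\ \ge\ \frac{A}{\|T\|^{4}}\|(TKT^{*})^{*}f\|^{2}-\sum_{i\in\mathcal J} w_i^{2}\|P_{T\mathcal W_i}f\|^{2}.
\end{equation*}
Hypothesis (\ref{inequa K-f.f.}), after identifying $(TKT^{*})^{*}=TK^{*}T^{*}$, tells me the erasure term is dominated by $C\|(TKT^{*})^{*}f\|^{2}$, so the right-hand side is at least a positive constant multiple of $\|(TKT^{*})^{*}f\|^{2}$, giving the required universal lower bound for the restricted weaving.

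For the upper bound, the two $TKT^{*}$-fusion frames $\{(T\mathcal W_i,w_i)\}$ and $\{(T\mathcal V_i,v_i)\}$ are in particular Bessel, so by Proposition \ref{Bessel} the full weaving is $TKT^{*}$-fusion Bessel; restricting the index set to $\mathcal I\setminus\mathcal J$ only decreases the sum, so the same universal upper bound transfers. The main technical point (and the only subtle step) is the arithmetic that the subtractive term from $\mathcal J$ can actually be absorbed, which forces the constant $C$ to be small compared with the induced lower bound $A/\|T\|^{4}$ coming from Proposition \ref{prop_K_intertwine1}; the threshold $C<A/\|T\|^{2}$ in the statement is looser than what the direct estimate yields, so either the stronger condition $C<A/\|T\|^{4}$ is what is actually needed, or a sharper control of $\|P_{\mathcal W_i}T^{*}f\|$ in terms of $\|P_{T\mathcal W_i}f\|$ must be invoked to save a factor of $\|T\|^{2}$. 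That bookkeeping is the principal obstacle, but everything else is a direct chain of the earlier results.
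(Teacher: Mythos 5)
Your proposal follows the same route as the paper's own proof: push the weaving through $T$ via Lemma \ref{lem_K_intertwine1} and Proposition \ref{prop_K_intertwine1}, apply the resulting universal lower bound to the partition $\sigma\cup\mathcal J$ versus its complement, subtract the $\mathcal J$-sum using hypothesis (\ref{inequa K-f.f.}), and get the upper bound from Proposition \ref{Bessel}. The one place you diverge --- the constant --- is not a defect of your argument but of the theorem as printed. The computation in Lemma \ref{lem_K_intertwine1} delivers the lower bound $A/\|T\|^{4}$ for the image weaving (one factor of $\|T\|^{2}$ from $\|(TKT^{*})^{*}f\|\le\|T\|\,\|K^{*}T^{*}f\|$ and another from $\|P_{\mathcal W_i}T^{*}P_{T\mathcal W_i}f\|\le\|T\|\,\|P_{T\mathcal W_i}f\|$), yet the paper's proof of Theorem \ref{Thm:erasure1} asserts, without justification, that the universal lower bound of the image weaving is $A/\|T\|^{2}$ and concludes with the constant $A/\|T\|^{2}-C$. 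Unless $\|T\|\le 1$, the stated threshold $C<A/\|T\|^{2}$ does not guarantee positivity of $A/\|T\|^{4}-C$, so your diagnosis is exactly right: either the hypothesis should read $C<A/\|T\|^{4}$ (with conclusion constant $A/\|T\|^{4}-C$), or a genuinely sharper estimate recovering the factor $\|T\|^{2}$ is needed, and neither you nor the paper supplies one. With the corrected threshold your proof is complete and is, in substance, the proof the paper intended.
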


\begin{proof} Since $\{(\mathcal W_i, w_i)\}_{i \in \mathcal{I}}$ and $\{(\mathcal V_i, v_i)\}_{i \in \mathcal{I}}$ are weaving $K$-fusion frames for $ \mathcal{H}_1 $, then by Lemma \ref{lem_K_intertwine1} and Proposition \ref{prop_K_intertwine1},  $\{(T\mathcal W_i, w_i)\}_{i \in \mathcal{I}}$ and $\{(T\mathcal V_i, v_i)\}_{i \in \mathcal{I}}$ form weaving $TKT^*$-fusion frames for $\mathcal H_2$  with universal lower bound $\frac{A}{\|T\|^2}$ in $\mathcal H_2$. Therefore, applying equation (\ref{inequa K-f.f.}), for every $\sigma \subset \mathcal{I} \setminus \mathcal{J}$ and for every $f \in \mathcal H_2$ we obtain,
	\begin{eqnarray*}
		\sum\limits_{i \in \sigma} w_i ^2 \|P_{T\mathcal W_i }\|^2 +  \sum\limits_{i \in \sigma ^c} v_i ^2 \|P_{T\mathcal V_i }\|^2 & =& \sum\limits_{i \in \sigma \cup \mathcal{J}} w_i ^2 \|P_{T\mathcal W_i }\|^2 + \sum\limits_{i \in \sigma^c}  v_i ^2 \|P_{T\mathcal V_i }\|^2 - \sum\limits_{i \in \mathcal{J}} w_i ^2 \|P_{T\mathcal W_i }\|^2\\
		& \geq & \frac{A}{\|T\|^2} \|(TKT^*)^*f\|^2 - C\|(TKT^*)^*f\|^2 \\
		& = & \left(\frac{A}{\|T\|^2}-C\right)\|(TKT^*)^*f\|^2,
	\end{eqnarray*}
	where $\sigma^c$ is the complement of $\sigma$ in $\mathcal I \setminus \mathcal J$.
	
	The universal upper bound will be followed by Proposition \ref{Bessel}.	
%Also  by considering $\sigma =\mathcal J^c$ and $\sigma = \emptyset $, the above inequality persuades that $\{T f_i\}_{i \in \mathcal I \setminus \mathcal J}$ and $\{T g_i\}_{i \in \mathcal I \setminus \mathcal J}$ are $TKT^*$-frames for $\mathcal H_2$.
\end{proof}

% The following Proposition provides a sufficient condition through which removal of a collection of elements from two $K$-woven %frames preserves their woven-ness.

By choosing $\mathcal H_1=\mathcal H_2$ and $T = I$, we obtain the following result.

\begin{cor}
Let $\{(\mathcal W_i, w_i)\}_{i \in \mathcal{I}}$ and $\{(\mathcal V_i, v_i)\}_{i \in \mathcal{I}}$ be weaving $K$-fusion frames for $ \mathcal{H}$ with the universal bounds A, B. Let us consider $ \mathcal{J} \subset \mathcal{I}$ and $0<C<A$ so that for every $f\in \mathcal{H}$,
	$$ \sum\limits_{i \in \mathcal{J}}w_i ^2 \|P_{\mathcal W_i }\|^2 \leq C \|K^*f\|^2,$$
	then $\{(\mathcal W_i, w_i)\}_{i \in \mathcal I \setminus \mathcal J}$ and $\{(\mathcal V_i, v_i)\}_{i \in \mathcal I \setminus \mathcal J}$ are weaving $K$-fusion frames  $ \mathcal{H}$  with the universal bounds $(A- C), B$.
\end{cor}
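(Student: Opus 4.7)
The plan is to specialize Theorem \ref{Thm:erasure1} to the case $\mathcal H_1 = \mathcal H_2 = \mathcal H$ and $T = I$, the identity operator on $\mathcal H$, exactly as flagged by the remark preceding the statement. The bulk of the argument reduces to checking that the three hypotheses of the theorem collapse to those of the corollary under this choice, and then reading off the resulting bounds.

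First I would observe that $I$ has closed range with $I^{\dag}=I$, so the subspace invariance condition $I^{\dag}\overline{I(\mathcal W_i)}\subset \mathcal W_i$ is trivially satisfied, and likewise for the $\mathcal V_i$. Since $\|I\|=1$, the theorem's restriction $0<C<A/\|T\|^2$ becomes precisely $0<C<A$. Because $I\mathcal W_i = \mathcal W_i$ and $IK^*I^* = K^*$, the erasure inequality (\ref{inequa K-f.f.}) reduces verbatim to $\sum_{i\in\mathcal J} w_i^2\|P_{\mathcal W_i}\|^2 \le C\|K^*f\|^2$, which is exactly the standing hypothesis. Invoking Theorem \ref{Thm:erasure1} then immediately delivers the universal lower bound $A/\|I\|^2-C = A-C$ for the weavings indexed by $\mathcal I\setminus\mathcal J$.

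For the universal upper bound, rather than invoking Proposition \ref{Bessel} (which would only supply $B_1+B_2$), I would argue directly that erasure cannot inflate the weaving sums. For any $\sigma\subset\mathcal I\setminus\mathcal J$, the set $\sigma\cup\mathcal J$ lies inside $\mathcal I$ and its complement in $\mathcal I$ equals $(\mathcal I\setminus\mathcal J)\setminus\sigma$, so the weaving of $\mathcal W_w$ and $\mathcal V_v$ associated with this partition of $\mathcal I$ is bounded above by $B\|f\|^2$ by hypothesis; dropping the nonnegative terms indexed by $\mathcal J$ from the $w_i$-sum then preserves the bound $B\|f\|^2$ for the erased weaving.

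The only bookkeeping subtlety is confirming that the universal upper bound $B$, rather than some strictly larger constant, genuinely transfers to the sub-weaving after erasure; once that is handled as above, the remainder is a mechanical substitution into Theorem \ref{Thm:erasure1}.
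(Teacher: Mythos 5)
Your proposal is correct and follows exactly the route the paper intends: the corollary is obtained by specializing Theorem \ref{Thm:erasure1} to $\mathcal H_1=\mathcal H_2=\mathcal H$ and $T=I$, and your verification that the hypotheses and the lower bound $A-C$ collapse correctly is just what is needed. Your direct argument for the upper bound $B$ (enlarging $\sigma$ to $\sigma\cup\mathcal J$, applying the universal upper weaving bound for the full index set, and discarding the nonnegative terms over $\mathcal J$) is in fact a necessary refinement, since citing Proposition \ref{Bessel} as in the theorem's proof would only yield the sum of the individual Bessel bounds rather than the stated constant $B$.
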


%\begin{proof} For every $\sigma \subset \mathcal I \setminus \mathcal J$ and every $f \in \mathcal H$ we obtain,	
%	\begin{eqnarray*}
%		\sum \limits _{i \in \sigma} |\langle f,f_i \rangle |^2 + \sum \limits_{i \in \sigma ^c} |\langle f,g_i \rangle |^2
% &= & \sum \limits_{i \in \sigma} |\langle f,f_i \rangle |^2 + \sum \limits_{i \in \sigma ^c} |\langle f,g_i \rangle |^2
%		\\ &=& \sum \limits_{i \in {\sigma \cup \mathcal J}} |\langle f,f_i \rangle |^2 - \sum \limits_{i \in \mathcal J} |\langle f, f_i \rangle |^2 + \sum \limits_{i \in \sigma ^c} |\langle f,g_i \rangle |^2
%		\\ &=&  \sum \limits_{i \in {\sigma \cup \mathcal J}} |\langle f,f_i \rangle |^2 + \sum \limits_{i \in ({\sigma \cup \mathcal J}) ^c } |\langle f,g_i \rangle |^2 - \sum \limits_{i \in \mathcal J} |\langle f,f_i \rangle |^2
%		\\ & \geq &  (A-C) \|K^*f\|^2.
%	\end{eqnarray*}	
%	The universal upper frame bound will be followed by Proposition \ref{Bessel}.
%\end{proof}

Using Proposition \ref{prop_K_intertwine2}, we get the following result analogous to Theorem \ref{Thm:erasure1}.
\begin{thm}\label{Thm:erasure2}
 Let $\{(\mathcal W_i, w_i)\}_{i \in \mathcal{I}}$ and $\{(\mathcal V_i, v_i)\}_{i \in \mathcal{I}}$ be two weighted collections of closed subspaces in $\mathcal{H}_1$ and $K \in \mathcal  L(\mathcal H_2)$. Suppose $T \in \mathcal L(\mathcal H_1, \mathcal H_2)$ is one-one, closed range operator so that $\{(T \mathcal W_i, w_i)\}_{i \in \mathcal{I}}$ and $\{(T \mathcal V_i, v_i)\}_{i \in \mathcal{I}}$  are weaving $K$-fusion frames for $R(T)$ with the universal lower bound $A$. Further suppose  $ \mathcal{J} \subset \mathcal{I}$ and $ 0<C<\frac{A}{\|T\|^4 \|T^{\dag}\|^2}$ so that for every  $ f \in \mathcal{H}_1 $
	\begin{eqnarray} \label{inequa}
	\sum\limits_{i \in \mathcal J} \left ( \frac {w_i} {\|T\|} \right)^2 \|P_{\mathcal W_i} f\|^2 \leq C \|(T^\dag K T)^*f\|^2.
	\end{eqnarray}
	Then, $\{(\mathcal W_i, \frac {w_i} {\|T\|})\}_{i\in \mathcal I \setminus \mathcal J}$ and $\{(\mathcal V_i, \frac {v_i} {\|T\|})\}_{i\in \mathcal I \setminus \mathcal J}$ are weaving $T^{\dag} K T$-fusion frames for $\mathcal{H}_1$.
\end{thm}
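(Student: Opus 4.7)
The plan is to mirror the strategy of Theorem \ref{Thm:erasure1}, trading the use of Lemma \ref{lem_K_intertwine1}/Proposition \ref{prop_K_intertwine1} for the ``pull-back'' counterparts Lemma \ref{lem_K_intertwine2}/Proposition \ref{prop_K_intertwine2}. First I would invoke Proposition \ref{prop_K_intertwine2} to conclude that the full families $\{(\mathcal W_i, w_i/\|T\|)\}_{i\in\mathcal I}$ and $\{(\mathcal V_i, v_i/\|T\|)\}_{i\in\mathcal I}$ are weaving $T^{\dag}KT$-fusion frames for $\mathcal H_1$ with universal lower bound $\frac{A}{\|T\|^4\,\|T^{\dag}\|^2}$ and some universal upper bound $B'$. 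This provides the baseline inequality to which the erasure hypothesis \eqref{inequa} can be subtracted.

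Next, given any $\sigma\subset\mathcal I\setminus\mathcal J$, I would write its complement inside the reduced index set as $\sigma^c=(\mathcal I\setminus\mathcal J)\setminus\sigma$ and use the telescoping decomposition
\begin{align*}
\sum_{i\in\sigma}\!\left(\tfrac{w_i}{\|T\|}\right)^{\!2}\!\|P_{\mathcal W_i}f\|^2 \!+\! \sum_{i\in\sigma^c}\!\left(\tfrac{v_i}{\|T\|}\right)^{\!2}\!\|P_{\mathcal V_i}f\|^2
&= \sum_{i\in\sigma\cup\mathcal J}\!\left(\tfrac{w_i}{\|T\|}\right)^{\!2}\!\|P_{\mathcal W_i}f\|^2
+ \sum_{i\in\sigma^c}\!\left(\tfrac{v_i}{\|T\|}\right)^{\!2}\!\|P_{\mathcal V_i}f\|^2 \\
&\quad - \sum_{i\in\mathcal J}\!\left(\tfrac{w_i}{\|T\|}\right)^{\!2}\!\|P_{\mathcal W_i}f\|^2.
\end{align*}
Because $\sigma\cup\mathcal J$ and $\sigma^c$ together form a partition of $\mathcal I$, the first two sums on the right-hand side are bounded below by $\frac{A}{\|T\|^4\,\|T^{\dag}\|^2}\,\|(T^{\dag}KT)^*f\|^2$ courtesy of the baseline weaving inequality from step one.

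The third (subtracted) sum is controlled directly by the erasure hypothesis \eqref{inequa}, which supplies the upper bound $C\,\|(T^{\dag}KT)^*f\|^2$. Combining these two estimates yields
\[
\sum_{i\in\sigma}\!\left(\tfrac{w_i}{\|T\|}\right)^{\!2}\!\|P_{\mathcal W_i}f\|^2 + \sum_{i\in\sigma^c}\!\left(\tfrac{v_i}{\|T\|}\right)^{\!2}\!\|P_{\mathcal V_i}f\|^2 \;\geq\; \left(\tfrac{A}{\|T\|^4\,\|T^{\dag}\|^2}-C\right)\|(T^{\dag}KT)^*f\|^2,
\]
which is strictly positive by the assumption $0<C<\frac{A}{\|T\|^4\,\|T^{\dag}\|^2}$. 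The universal upper bound for every such weaving is then immediate from Proposition \ref{Bessel} applied to the two $T^{\dag}KT$-fusion Bessel sequences obtained in step one.

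I do not anticipate a conceptual obstacle here, since the argument is structurally identical to Theorem \ref{Thm:erasure1}; the only place that requires care is matching the normalisation factors $w_i/\|T\|$ and the lower bound $A/(\|T\|^4\|T^{\dag}\|^2)$ delivered by Proposition \ref{prop_K_intertwine2}, so that the erasure threshold $C<A/(\|T\|^4\|T^{\dag}\|^2)$ precisely suffices to keep the resulting constant positive.
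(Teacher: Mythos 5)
Your proposal is correct and follows exactly the route the paper intends: the paper gives no written proof for Theorem \ref{Thm:erasure2}, stating only that it follows from Proposition \ref{prop_K_intertwine2} analogously to Theorem \ref{Thm:erasure1}, and your argument (pull back via Proposition \ref{prop_K_intertwine2}, telescope over $\sigma\cup\mathcal J$, subtract the erasure bound \eqref{inequa}, and close with Proposition \ref{Bessel}) is precisely that analogue with the constants correctly matched.
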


%\begin{rem}
%	Using Theorem 2.4 of \cite{DeVa18} it is to be concluded that if   $ \mathcal{F}=\{f_i\}_{i\in \mathcal{I}}$ and $ \mathcal{G}=\{g_i\}_{i\in \mathcal{I}} $ are $K$-frames for $\mathcal{H}_1$. Then for $T \in \mathcal L(\mathcal H_1, \mathcal H_2)$, $ T\mathcal{F}$ and $ T\mathcal{G} $ are $TKT^*$-woven in $\mathcal{H}_2$  if and only if  for every $\sigma \subset \mathcal{I}$, there is a bounded, linear operator
%	$ M_\sigma : l^2(\mathcal{I})\rightarrow \mathcal{H}_2$  defined as,
%	\[
%	M_\sigma(e_i) =
%	\begin{cases}
%	Tf_i   ~: i\in \sigma\\
	
%	Tg_i~:  i\in\sigma^c \\
	
%	\end{cases}
%	\]
%	so that for some $A>0$,  $ATKT^*TK^*T^* \leq M_\sigma M_\sigma^*$.
%\end{rem}

{\bf{Acknowledgment}}

The first author acknowledges the fiscal support of MHRD, Government of India.

\end{document}